\documentclass[a4paper, 12pt]{amsart}

\usepackage{amsmath}
\usepackage{amssymb}
\usepackage{ascmac}
\usepackage{amsthm}

\setcounter{tocdepth}{1}

\oddsidemargin=-0.1in
\evensidemargin=-0.1in
\topmargin=-0.3in
\textwidth=6.5in
\textheight=9.7in

\makeatletter

\@addtoreset{equation}{section}
\makeatother 

\theoremstyle{plain}
\newtheorem{thm}{Theorem}[section]
\newtheorem*{thm*}{Theorem}
\newtheorem{prop}[thm]{Proposition}
\newtheorem{lem}[thm]{Lemma}

\theoremstyle{definition}

\theoremstyle{remark}
\newtheorem{rem}[thm]{Remark}

\renewcommand{\epsilon}{\varepsilon}

\newcommand{\Lip}{\mathrm{Lip}}

\newcommand{\rarrow}{\rightarrow}

\newcommand{\olarrow}{\overleftarrow}

\usepackage{latexsym}

\title[Heat flow and concentration of measure]{Heat flow and concentration of measure on\\ directed graphs with a lower Ricci curvature bound}

\author{Ryunosuke Ozawa}
\address{Department of Mathematics, National Defense Academy of Japan, 1-10-20 Hashirimizu, Yokosuka, 239-8686 Japan}
\email{rozawa@nda.ac.jp}

\author{Yohei Sakurai}
\address{Department of Mathematics, Saitama University, 255 Shimo-Okubo, Sakura-ku, Saitama-City, Saitama, 338-8570, Japan}
\email{ysakurai@rimath.saitama-u.ac.jp}

\author{Taiki Yamada}
\address{Interdisciplinary Faculty of Science and Engineering, Shimane University, 1060 Nishikawatsu-cho, Matsue, Shimane, 690-8504, Japan}
\email{taiki\_yamada@riko.shimane-u.ac.jp}

\subjclass[2010]{Primary 05C20, 05C12, 05C81, 53C21, 53C23}
\keywords{Directed graph; Ricci curvature; Gradient estimate; Heat flow; Concentration of measure; Functional inequality}
\date{February 24, 2022}

\begin{document}
\maketitle

\begin{abstract}
In a previous work \cite{OSY1}, the authors introduced a Lin-Lu-Yau type Ricci curvature for directed graphs referring to the formulation of the Chung Laplacian.
The aim of this note is to provide a von Renesse-Sturm type characterization of our lower Ricci curvature bound via a gradient estimate for the heat semigroup, and a transportation inequality along the heat flow.
As an application,
we will conclude a concentration of measure inequality for directed graphs of positive Ricci curvature.
\end{abstract}

\section{Introduction}

\subsection{Main results}

On a smooth Riemannian manifold,
von Renesse-Sturm \cite{RS} have characterized a lower Ricci curvature bound in terms of a gradient estimate for the heat semigroup, and a transportation inequality along the heat flow.
M\"unch-Wojciechowski \cite{MW} have studied its discrete analogue,
and produced a characterization of a lower bound of the Ricci curvature introduced by Lin-Lu-Yau \cite{LLY} on (undirected) graphs (see also \cite{O}).

In \cite{OSY1},
the authors have introduced a Lin-Lu-Yau type Ricci curvature for directed graphs inspired by the formulation of the Chung Laplacian (\cite{C1}, \cite{C2}).
In the present note,
we aim to extend the results of M\"unch-Wojciechowski \cite{MW} to our directed setting.
Let us introduce our main results (see Section \ref{sec:Preliminaries} for the precise meaning of the notations).
Let $(V,\mu)$ be a simple,
strongly connected,
finite weighted directed graph,
where $V$ is the vertex set,
and $\mu:V\times V\to [0,\infty)$ is the (non-symmetric) edge weight.
We denote by $d:V\times V\to [0,\infty)$ the (non-symmetric) graph distance function on $V$,
and by $W$ the Wasserstein distance function.
For $x,y\in V$ with $x\neq y$,
let $\kappa(x,y)$ stand for the Ricci curvature introduced in \cite{OSY1}.
For $f:V\to \mathbb{R}$,
let $\Lip f$ denote its Lipschitz constant.
We denote by $\mathcal{L}$ the Chung Laplacian,
and by $P_t$ the heat semigroup
\begin{equation}\label{eq:heat semigroup}
P_t:=e^{-t \mathcal{L}}.
\end{equation}
Furthermore,
let $p^{t}_{x}$ be the associated heat kernel measure at $x$.

In \cite{OSY1},
the authors have derived a representation formula for the Ricci curvature in terms of the Chung Laplacian (see Theorem \ref{thm:limit free formula} below).
With the help of such a formula,
we first prove the following characterization theorem in our directed setting:
\begin{thm}\label{thm:RS}
Let $(V,\mu)$ denote a simple,
strongly connected,
finite weighted directed graph.
For $K\in \mathbb{R}$,
the following are equivalent:
\begin{enumerate}\setlength{\itemsep}{+0.5mm}
\item $\inf_{x\neq y}\kappa(x,y)\geq K$; \label{enum:Curv}
\item for all $f:V\to \mathbb{R}$ and $t>0$,
\begin{equation}\label{eq:Grad}
\Lip \,P_t f\leq e^{-Kt}\,\Lip\, f;
\end{equation}\label{enum:Grad}
\item for all $x,y\in V$ and $t>0$,
\begin{equation}\label{eq:Transport}
W(p^{t}_x,p^{t}_{y})\leq e^{-Kt}d(x,y).
\end{equation}\label{enum:Transport}
\end{enumerate}
\end{thm}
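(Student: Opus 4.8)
The plan is to establish the cyclic chain of implications $(1)\Rightarrow(2)\Rightarrow(3)\Rightarrow(1)$. The steps relating only $(2)$ and $(3)$ are soft: they are the two sides of the Kantorovich--Rubinstein duality for the (possibly non-symmetric) distance $d$, and carry no curvature information. The curvature bound $(1)$ enters the picture only through the representation formula of Theorem~\ref{thm:limit free formula}, which I will use in the form: $\kappa(x,y)\,d(x,y)$ equals the infimum of $\mathcal{L}f(x)-\mathcal{L}f(y)$ over all $f\colon V\to\mathbb{R}$ with $\Lip f\le 1$ and $f(x)-f(y)=d(x,y)$. I will also use two harmless facts: this infimum is taken over a nonempty set (e.g.\ $f=d(\cdot,y)$ is admissible, by the triangle inequality), and $d(x,y)\in(0,\infty)$ for $x\ne y$ since $(V,\mu)$ is strongly connected and finite.

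For $(2)\Rightarrow(3)$, I would use duality in the form $W(p^t_x,p^t_y)=\sup\{P_tf(x)-P_tf(y):\Lip f\le 1\}$ (valid because $\Lip f\le 1$ is exactly the requirement $f(u)-f(v)\le d(u,v)$ for all ordered pairs, and $P_tf(x)=\int f\,dp^t_x$): for $\Lip f\le 1$, estimate $(2)$ gives $P_tf(x)-P_tf(y)\le\Lip(P_tf)\,d(x,y)\le e^{-Kt}\Lip f\,d(x,y)\le e^{-Kt}d(x,y)$, and taking the supremum yields \eqref{eq:Transport}. For $(3)\Rightarrow(1)$, I would fix $x\ne y$, take any admissible $f$ (so $\Lip f\le 1$ and $f(x)-f(y)=d(x,y)$), and apply the same duality in the reverse direction: $P_tf(x)-P_tf(y)\le W(p^t_x,p^t_y)\le e^{-Kt}d(x,y)$, so that
\begin{equation*}
\frac{\bigl(f(x)-f(y)\bigr)-\bigl(P_tf(x)-P_tf(y)\bigr)}{t}\ \ge\ \frac{1-e^{-Kt}}{t}\,d(x,y).
\end{equation*}
As $V$ is finite, $t\mapsto P_tf=e^{-t\mathcal{L}}f$ is smooth and $\frac{d}{dt}\big|_{t=0}P_tf=-\mathcal{L}f$, so letting $t\to 0^+$ gives $\mathcal{L}f(x)-\mathcal{L}f(y)\ge K\,d(x,y)$; taking the infimum over admissible $f$ and invoking Theorem~\ref{thm:limit free formula} gives $\kappa(x,y)\ge K$, and $(1)$ follows as $x\ne y$ were arbitrary.

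The core of the argument is $(1)\Rightarrow(2)$. Fix $f$ and set $L(t):=\Lip(P_tf)=\max_{x\ne y}\varphi_{xy}(t)$ with $\varphi_{xy}(t):=\bigl(P_tf(x)-P_tf(y)\bigr)/d(x,y)$. Each $\varphi_{xy}$ is real-analytic, so $L$ is a maximum of finitely many analytic functions, hence locally Lipschitz and differentiable a.e., with $L(0)=\Lip f$. On an interval where $L>0$, fix a differentiability point $t$ and a pair $(x,y)$ attaining the maximum; then $h:=P_tf/L(t)$ has $\Lip h=1$ and $h(x)-h(y)=d(x,y)$, so Theorem~\ref{thm:limit free formula} together with $(1)$ gives $\mathcal{L}h(x)-\mathcal{L}h(y)\ge\kappa(x,y)\,d(x,y)\ge K\,d(x,y)$, i.e.\ $\mathcal{L}P_tf(x)-\mathcal{L}P_tf(y)\ge K\,L(t)\,d(x,y)$. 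Since $L-\varphi_{xy}\ge 0$ vanishes at $t$ and both terms are differentiable there, $L'(t)=\varphi_{xy}'(t)=-\bigl(\mathcal{L}P_tf(x)-\mathcal{L}P_tf(y)\bigr)/d(x,y)\le-K\,L(t)$. Thus $\frac{d}{dt}\bigl(e^{Kt}L(t)\bigr)\le 0$ a.e., and absolute continuity gives $L(t)\le e^{-Kt}L(0)=e^{-Kt}\Lip f$. The only degenerate case is $L(t_0)=0$ for some $t_0$, where $P_{t_0}f$ is constant; then $P_tf$ stays constant for $t\ge t_0$ (the heat flow of a constant is constant), so $L$ vanishes on $[t_0,\infty)$ and \eqref{eq:Grad} is trivial there. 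Combining the cases yields \eqref{eq:Grad} for all $t>0$.

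I expect the main obstacle to be the ``contact point'' step of $(1)\Rightarrow(2)$: one must argue that $L$ is regular enough to run a differential inequality (this comes down to finiteness of the index set $\{(x,y):x\ne y\}$ and a.e.\ differentiability of a finite maximum of smooth functions), justify the envelope identity $L'(t)=\varphi_{xy}'(t)$ at a maximizing pair, treat the normalization $h=P_tf/L(t)$ precisely where $L$ might vanish, and, crucially, make the sign convention of Theorem~\ref{thm:limit free formula} consistent with the heat equation $\partial_tP_tf=-\mathcal{L}P_tf$, so that a lower bound on $\kappa$ yields the \emph{decaying} factor $e^{-Kt}$. The remaining points are routine.
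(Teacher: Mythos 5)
Your proof is correct, and it rests on the same two pillars as the paper's argument --- the representation formula of Theorem~\ref{thm:limit free formula} and the Kantorovich--Rubinstein duality of Proposition~\ref{prop:Kantorovich duality} --- but the implementation differs at the key step. The paper splits the theorem into the two equivalences $(1)\Leftrightarrow(2)$ (Proposition~\ref{prop:RS1}) and $(2)\Leftrightarrow(3)$ (Proposition~\ref{prop:RS2}); for $(1)\Rightarrow(2)$ it argues by contradiction, locating a time $t_0$ at which $e^{Kt}\nabla_{xy}P_tf$ exceeds $\Lip f$ with positive derivative and applying Theorem~\ref{thm:limit free formula} to the normalization $P_{t_0}f/\nabla_{xy}P_{t_0}f$ for the fixed pair $(x,y)$. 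You instead run a direct Gronwall/envelope argument on $L(t)=\Lip P_tf$, viewed as a finite maximum of analytic functions, and normalize by $L(t)$ at a \emph{maximizing} pair; this buys you that the normalized function automatically satisfies the global $1$-Lipschitz bound required for membership in $\mathcal{F}_{xy}$, a point that the paper's normalization for an arbitrary fixed pair leaves implicit, at the cost of the (correctly handled) bookkeeping about a.e.\ differentiability, the contact-point identity $L'=\varphi_{xy}'$, and the degenerate case $L(t_0)=0$. Your $(2)\Rightarrow(3)$ coincides with the paper's, and your direct $(3)\Rightarrow(1)$ simply collapses the paper's chain $(3)\Rightarrow(2)\Rightarrow(1)$ by testing only against functions in $\mathcal{F}_{xy}$ and differentiating at $t=0$. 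One caveat: your reading of the Lipschitz condition, $f(u)-f(v)\le d(u,v)$ for all ordered pairs, is the reverse of the paper's definition $\Lip f=\sup_{x\ne y}\bigl(f(y)-f(x)\bigr)/d(x,y)$, and since $d$ is non-symmetric the two function classes genuinely differ. You use your convention consistently --- your forms of the duality and of Theorem~\ref{thm:limit free formula} are the images of the paper's statements under $f\mapsto -f$, and statements (1)--(3) are unaffected because (2) is quantified over all $f$ --- so the argument is sound; but several of your displayed formulas (the duality with $P_tf(x)-P_tf(y)$, the admissibility of $d(\cdot,y)$, the restated representation formula) would be false if ``$\Lip$'' were read with the paper's orientation, so this convention should be stated explicitly.
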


Theorem \ref{thm:RS} has been obtained by M\"unch-Wojciechowski \cite{MW} in the undirected case (see \cite[Theorem 3.8]{MW}).
Notice that
in the undirected case,
they have proven it not only for finite graphs but also for infinite graphs.

Having Theorem \ref{thm:RS} at hand,
we will investigate the concentration of measure phenomena with respect to the Perron measure $\mathfrak{m}$.
Due to the lack of symmetry of the distance function,
we need to introduce the following notion:
For $x,y\in V$,
we set
\begin{equation*}
\mathcal{D}(x,y):=\max\{d(x,y),d(y,x)\},\quad \mathcal{D}_x:=\sup_{y\in \mathcal{N}_x}\mathcal{D}(x,y),
\end{equation*}
where $\mathcal{N}_x$ is the neighborhood of $x$.
Note that $\mathcal{D}_x\geq 1$ in general.
In the undirected case,
the distance function $d$ is symmetric,
and hence we possess $\mathcal{D}_x=1$.
Taking the asymmetry into account,
we conclude the following concentration inequality for directed graphs:
\begin{thm}\label{thm:concentration}
Let $(V,\mu)$ denote a simple,
strongly connected,
finite weighted directed graph.
For $K>0$ and $\Lambda \geq 1$,
we assume $\inf_{x\neq y}\kappa(x,y)\geq K$ and $\sup_{x\in V}\mathcal{D}_x\leq \Lambda$.
Then for every $1$-Lipschitz function $f:V\to \mathbb{R}$
we have
\begin{equation*}
\mathfrak{m}(\{f\geq \mathfrak{m}(f)+ r\})\leq e^{-\frac{Kr^2}{\Lambda^2}},
\end{equation*}
where $\mathfrak{m}(f)$ is the mean of $f$ defined as
\begin{equation*}
\mathfrak{m}(f):=\sum_{x\in V}f(x)\mathfrak{m}(x).
\end{equation*}
\end{thm}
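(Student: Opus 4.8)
The strategy is a Herbst-type exponential-moment argument run along the heat flow, with the gradient estimate \eqref{eq:Grad} of Theorem \ref{thm:RS} taking the place usually taken by a curvature--dimension inequality. Two structural facts about the Chung Laplacian will be used, both being built into its construction (see Section \ref{sec:Preliminaries}): first, $\mathcal{L}$ is self-adjoint with respect to $\mathfrak{m}$, with associated symmetric Dirichlet form
\begin{equation*}
\sum_{x\in V}(\mathcal{L}h)(x)\,k(x)\,\mathfrak{m}(x)=\frac12\sum_{x,y\in V}w(x,y)\,(h(x)-h(y))\,(k(x)-k(y)),
\end{equation*}
where $w\geq0$ is symmetric, vanishes unless $x$ and $y$ are joined by a directed edge in one of the two orientations, and satisfies $\sum_{y\in V}w(x,y)=\mathfrak{m}(x)$ for every $x$; second, since $(V,\mu)$ is finite and strongly connected, $0$ is a simple eigenvalue of $\mathcal{L}$ with constant eigenfunctions, so $P_tf$ converges as $t\to\infty$, uniformly on $V$, to the $L^2(\mathfrak{m})$-projection of $f$ onto the constants, which is $\mathfrak{m}(f)$. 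In particular $\mathfrak{m}(P_tf)=\mathfrak{m}(f)$ for all $t$.

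Fix a $1$-Lipschitz $f$ and a parameter $s>0$, put $g_t:=P_tf$, and set $\phi(t):=\mathfrak{m}(e^{sg_t})=\sum_{x}e^{sg_t(x)}\mathfrak{m}(x)$, a smooth positive function with $\phi(0)=\mathfrak{m}(e^{sf})$ and $\phi(\infty)=e^{s\mathfrak{m}(f)}$. Since $\partial_t g_t=-\mathcal{L}g_t$, differentiating and using self-adjointness give
\begin{equation*}
-\phi'(t)=\frac{s}{2}\sum_{x,y}w(x,y)\,(g_t(x)-g_t(y))\big(e^{sg_t(x)}-e^{sg_t(y)}\big).
\end{equation*}
Applying the elementary convexity inequality $(a-b)(e^{a}-e^{b})\le\frac12(a-b)^2(e^{a}+e^{b})$ to each summand (with $a=sg_t(x)$, $b=sg_t(y)$), using the symmetry of $w$, then using that $w(x,y)\neq0$ forces $\mathcal{D}(x,y)\le\Lambda$ and hence $(g_t(x)-g_t(y))^2\le(\Lip g_t)^2\,\mathcal{D}(x,y)^2\le(\Lip g_t)^2\Lambda^2$, and finally $\sum_yw(x,y)=\mathfrak{m}(x)$, one obtains
\begin{equation*}
-\phi'(t)\le\frac{s^2}{2}\,\Lambda^2\,(\Lip g_t)^2\,\phi(t).
\end{equation*}
By \eqref{eq:Grad} and $\Lip f\le1$ we have $\Lip g_t=\Lip P_tf\le e^{-Kt}$, so $-(\log\phi)'(t)\le\frac12 s^2\Lambda^2e^{-2Kt}$; integrating over $(0,\infty)$, where $K>0$ enters, gives
\begin{equation*}
\log\mathfrak{m}(e^{sf})=\log\phi(0)\le\log\phi(\infty)+\frac{s^2\Lambda^2}{4K}=s\,\mathfrak{m}(f)+\frac{s^2\Lambda^2}{4K}.
\end{equation*}

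Finally, for $r>0$ Markov's inequality yields $\mathfrak{m}(\{f\ge\mathfrak{m}(f)+r\})\le e^{-s(\mathfrak{m}(f)+r)}\mathfrak{m}(e^{sf})\le\exp(-sr+s^2\Lambda^2/(4K))$, and taking $s=2Kr/\Lambda^2$, the minimizer of the exponent, gives exactly $e^{-Kr^2/\Lambda^2}$. The only genuine difficulty is that, in the discrete setting, $\mathcal{L}(e^{sg_t})$ obeys no chain rule, so the manifold identity $-\int(\mathcal{L}g)e^{sg}\,d\mathfrak{m}=s\int|\nabla g|^2e^{sg}\,d\mathfrak{m}$ is not available; the convexity inequality above is precisely the substitute, and its interaction with the support condition on $w$ is what forces the asymmetry constant $\mathcal{D}_x\le\Lambda$ into the exponent (in the undirected case $\mathcal{D}_x=1$ and one recovers the standard sub-Gaussian rate). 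The auxiliary claims—the Dirichlet-form identity, $\mathfrak{m}(P_tf)=\mathfrak{m}(f)$, and the long-time convergence of $P_tf$—are routine given that $(V,\mu)$ is finite and strongly connected.
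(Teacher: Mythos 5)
Your proof is correct and follows essentially the same route as the paper's argument in Section \ref{sec:Concentration}: your differential inequality for $\mathfrak{m}(e^{sP_tf})$ combines exactly the paper's Lemma \ref{lem:prepare} (the convexity inequality $(a-b)(e^a-e^b)\le\tfrac12(a-b)^2(e^a+e^b)$), the gradient estimate \eqref{eq:Grad} with the bound $\mathcal{D}(x,y)\le\Lambda$ on the support of $\mathfrak{m}_{xy}$ as in Proposition \ref{prop:Laplace estimate}, and the Chernoff optimization of Proposition \ref{prop:Chernoff}. The only cosmetic difference is that you keep $\mathfrak{m}(f)$ instead of normalizing to mean zero and you write out the Markov/Chernoff step explicitly.
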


In the undirected case,
Jost-M\"unch-Rose \cite{JMR} have shown Theorem \ref{thm:concentration} based on the method of \cite{S} (see \cite[Theorem 3.1]{JMR}).
Also,
Fathi-Shu \cite{FS} have (implicitly) shown a similar result for reversible Markov chains via functional inequalities (see \cite[Theorems 1.13 and 2.4]{FS}, and cf. \cite{ELL}).

\subsection{Organization}\label{sec:Organization}
In Section \ref{sec:Preliminaries},
we will review basics of directed graphs.
In Section \ref{sec:Heat flow},
we prove Theorem \ref{thm:RS}.
In Sections \ref{sec:Concentration} and \ref{sec:Functional inequality},
we prove Theorem \ref{thm:concentration} in two different ways.
In Section \ref{sec:Concentration},
we do it by following the argument of \cite{S} as in \cite{JMR}.
In Section \ref{sec:Functional inequality},
we examine functional inequalities such as transportation-information inequality and transportation-entropy inequality,
and apply them to another proof of Theorem \ref{thm:concentration} as in \cite{FS}.

\section{Preliminaries}\label{sec:Preliminaries}
We review basics on directed graphs.
We refer to \cite{OSY1}.

\subsection{Directed graphs}\label{sec:Directed graphs}
Let $(G,\mu)$ be a finite weighted directed graph,
namely,
$G=(V,E)$ is a finite directed graph,
and $\mu:V \times V\to [0,\infty)$ is a function such that $\mu(x,y)>0$ if and only if $x\rightarrow y$,
where $x\rightarrow y$ means $(x,y) \in E$.
The function $\mu$ is called the \textit{edge weight},
and we write $\mu(x,y)$ by $\mu_{xy}$.
Note that
$(G,\mu)$ is undirected if and only if $\mu_{xy}=\mu_{yx}$ for all $x,y\in V$,
and simple if and only if $\mu_{xx}=0$ for all $x\in V$.
It is also called \textit{unweighted} if $\mu_{xy}=1$ whenever $x\rightarrow y$.
The weighted directed graph can be written as $(V,\mu)$ since the full information of $E$ is included in $\mu$.

For $x \in V$, 
its \textit{outer neighborhood $N_{x}$}, \textit{inner one $\olarrow{N}_{x}$}, and \textit{neighborhood $\mathcal{N}_{x}$} are defined as
\begin{equation*}\label{eq:neighborhoods}
N_{x}:= \left\{y \in V \mid x\rightarrow y \right\},\quad \olarrow{N}_{x}:= \left\{y \in V \mid y \rightarrow x \right\},\quad \mathcal{N}_{x}:=N_{x} \cup \olarrow{N}_{x},
\end{equation*}
respectively.

A sequence $\left\{x_{i} \right\}_{i=0}^{l}$ of vertices is called a \textit{directed path} from $x$ to $y$ if $x_0=x,\,x_{l}=y$ and $x_{i}\rarrow x_{i+1}$ for all $i=0,\dots,l-1$,
where $l$ is called its length.
$(V,\mu)$ is said to be \textit{strongly connected} if
for any $x,y\in V$,
there is a directed path from $x$ to $y$.
For strongly connected $(V,\mu)$,
the (non-symmetric) distance function $d:V\times V\to [0,\infty)$ is defined as follows:
$d(x,y)$ is defined to be the minimum of the lengths of directed paths from $x$ to $y$.

For $f:V\to \mathbb{R}$,
its \textit{Lipschitz constant} is defined by
\begin{equation*}
\Lip f:=\sup_{x\neq y} \nabla_{xy} f,
\end{equation*}
where $\nabla_{xy}$ is the \textit{gradient operator} defined as
\begin{equation*}
\nabla_{xy}f:=\frac{f(y)-f(x)}{d(x,y)}.
\end{equation*}
For $L>0$,
$f$ is said to be \textit{$L$-Lipschitz} if
\begin{equation*}
\Lip f\leq L.
\end{equation*}
Let $\Lip_{L}(V)$ stand for the set of all $L$-Lipschitz functions.

\subsection{Chung Laplacian}\label{sec:Laplacians}
Hereafter,
let $(V,\mu)$ be a simple,
strongly connected,
finite weighted directed graph.
In this subsection,
we review the formulation of the Chung Laplacian introduced in \cite{C1}, \cite{C2}.
The \textit{transition probability kernel} $P:V\times V\to [0,1]$ is defined as
\begin{equation*}\label{eq:Markov kernel}
P(x,y):=\frac{\mu_{xy}}{\mu(x)},
\end{equation*}
where
\begin{equation*}
\mu(x):=\sum_{y\in V}\mu_{xy}.
\end{equation*}
Since $(V,\mu)$ is finite and strongly connected,
the Perron-Frobenius theorem ensures that
there is a unique (up to scaling) positive function $m:V\to (0,\infty)$ such that
\begin{equation}\label{eq:Perron Frobenius}
m(x)=\sum_{y\in V}m(y)P(y,x).
\end{equation}
A probability measure $\mathfrak{m}:V\to (0,1]$ satisfying (\ref{eq:Perron Frobenius}) is called the \textit{Perron measure} (or the \textit{stationary probability measure}).

Let $\mathfrak{m}$ be the Perron measure.
For a non-empty subset $\Omega \subset V$,
we set
\begin{equation*}\label{eq:measure}
\mathfrak{m}(\Omega):=\sum_{x\in \Omega}\mathfrak{m}(x).
\end{equation*}
The \textit{reverse transition probability kernel} $\olarrow{P}:V\times V\to [0,1]$,
and the \textit{mean transition probability kernel} $\mathcal{P}:V\times V\to [0,1]$ are defined as
\begin{equation*}\label{eq:perron vector}
\olarrow{P}(x,y):=\frac{\mathfrak{m}(y)}{\mathfrak{m}(x)}P(y,x),\quad \mathcal{P}:=\frac{1}{2}(P+\olarrow{P}).
\end{equation*}
Let $\mathcal{F}$ be the set of all functions on $V$.
The \textit{Chung Laplacian} $\mathcal{L}:\mathcal{F}\to \mathcal{F}$ is given by
\begin{equation*}\label{eq:Chung Laplacian}
\mathcal{L}f(x):=f(x)-\sum_{y\in V} \mathcal{P}(x,y)f(y).
\end{equation*}
We will also use the \textit{negative Laplacian} $\Delta:\mathcal{F}\to \mathcal{F}$ defined by
\begin{equation*}\label{eq:negative Laplacian}
\Delta:=-\mathcal{L}.
\end{equation*}

We define a function $\mathfrak{m}:V\times V \to [0,\infty)$ by
\begin{equation*}\label{eq:symmetric edge weight}
\mathfrak{m}(x,y):=\frac{1}{2}(\mathfrak{m}(x)P(x,y)+\mathfrak{m}(y)P(y,x))=\mathfrak{m}(x)\mathcal{P}(x,y).
\end{equation*}
We write $\mathfrak{m}(x,y)$ by $\mathfrak{m}_{xy}$.
The following properties hold:
(1) $\mathfrak{m}_{xy}=\mathfrak{m}_{yx}$;
(2) $\mathfrak{m}_{xy}>0$ if and only if $y\in \mathcal{N}_{x}$ (or equivalently, $x\in \mathcal{N}_{y}$);
(3) $\mathcal{P}(x,y)=\mathfrak{m}_{xy}/\mathfrak{m}(x)$.

The \textit{inner product} on $\mathcal{F}$ is defined by
\begin{equation*}
(f_{0},f_{1}):=\sum_{x\in V} f_{0}(x)f_{1}(x)\mathfrak{m}(x)=\mathfrak{m}(f_0 f_1).
\end{equation*}
Also,
the \textit{$\Gamma$-operator} is defined as
\begin{equation*}
\Gamma(f_{0},f_{1}):=\frac{1}{2}\left(\Delta(f_{0}\, f_{1})-f_{0}\, \Delta f_{1}-f_{1}\, \Delta f_{0} \right),\quad \Gamma(f):=\Gamma(f,f).
\end{equation*}
By direct computations, we see the following (cf. \cite[Proposition 7.1]{OSY1}):
\begin{equation*}\label{eq:gradient}
\Gamma(f_0,f_1)(x)=\frac{1}{2}\sum_{y\in V} (f_0(y)-f_0(x))(f_1(y)-f_1(x)) \mathcal{P}(x,y).
\end{equation*}
We possess the following integration by parts formula (see e.g., \cite[Proposition 2.4]{OSY1}):
\begin{prop}\label{prop:integration by parts}
Let $\Omega \subset V$ be a non-empty subset.
Then for all $f_{0},f_{1}:V\to \mathbb{R}$,
\begin{align*}
\sum_{x\in \Omega}\mathcal{L}f_{0}(x)f_{1}(x)\mathfrak{m}(x)
&=\frac{1}{2}\sum_{x,y\in \Omega}(f_{0}(y)-f_{0}(x))(f_{1}(y)-f_{1}(x))\mathfrak{m}_{xy}\\
&\quad -\sum_{x\in \Omega}\sum_{y\in V\setminus \Omega}(f_{0}(y)-f_{0}(x))f_{1}(x)\mathfrak{m}_{xy}.
\end{align*}
In particular,
\begin{equation*}
(\mathcal{L}f_{0},f_{1})=\mathfrak{m}(\Gamma(f_0,f_1))=(f_{0},\mathcal{L}f_{1}).
\end{equation*}
\end{prop}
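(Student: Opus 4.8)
The plan is to unwind the definition of the Chung Laplacian and then symmetrize using the identity $\mathfrak{m}_{xy}=\mathfrak{m}_{yx}$. First I would record that $\mathcal{P}$ is stochastic, i.e. $\sum_{y\in V}\mathcal{P}(x,y)=1$ for every $x\in V$: indeed $\sum_{y}P(x,y)=\mu(x)/\mu(x)=1$, while $\sum_{y}\olarrow{P}(x,y)=\mathfrak{m}(x)^{-1}\sum_{y}\mathfrak{m}(y)P(y,x)=1$ by the Perron equation (\ref{eq:Perron Frobenius}), so $\mathcal{P}=\tfrac12(P+\olarrow{P})$ is stochastic as well. Consequently $\mathcal{L}f_0(x)=\sum_{y\in V}\mathcal{P}(x,y)(f_0(x)-f_0(y))=-\sum_{y\in V}(f_0(y)-f_0(x))\mathcal{P}(x,y)$, and multiplying by $f_1(x)\mathfrak{m}(x)$, summing over $x\in\Omega$, and using $\mathfrak{m}_{xy}=\mathfrak{m}(x)\mathcal{P}(x,y)$ gives
\begin{equation*}
\sum_{x\in\Omega}\mathcal{L}f_0(x)f_1(x)\mathfrak{m}(x)=-\sum_{x\in\Omega}\sum_{y\in V}(f_0(y)-f_0(x))f_1(x)\mathfrak{m}_{xy}.
\end{equation*}

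Next I would split the inner sum according to whether $y\in\Omega$ or $y\in V\setminus\Omega$. The $V\setminus\Omega$ part is exactly the second term in the asserted formula. For the $\Omega$ part, write $S:=-\sum_{x,y\in\Omega}(f_0(y)-f_0(x))f_1(x)\mathfrak{m}_{xy}$; relabelling $x$ and $y$ and invoking $\mathfrak{m}_{xy}=\mathfrak{m}_{yx}$ also gives $S=-\sum_{x,y\in\Omega}(f_0(x)-f_0(y))f_1(y)\mathfrak{m}_{xy}$, so adding the two expressions yields $2S=-\sum_{x,y\in\Omega}(f_0(y)-f_0(x))\bigl(f_1(x)-f_1(y)\bigr)\mathfrak{m}_{xy}=\sum_{x,y\in\Omega}(f_0(y)-f_0(x))(f_1(y)-f_1(x))\mathfrak{m}_{xy}$, i.e. $S=\tfrac12\sum_{x,y\in\Omega}(f_0(y)-f_0(x))(f_1(y)-f_1(x))\mathfrak{m}_{xy}$. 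Combining the two parts produces the displayed identity.

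For the ``in particular'' assertion I would take $\Omega=V$, so that the boundary term vanishes and $(\mathcal{L}f_0,f_1)=\tfrac12\sum_{x,y\in V}(f_0(y)-f_0(x))(f_1(y)-f_1(x))\mathfrak{m}_{xy}$. On the other hand, from the pointwise formula for $\Gamma$ recorded just above and again $\mathfrak{m}_{xy}=\mathfrak{m}(x)\mathcal{P}(x,y)$, one has $\mathfrak{m}(\Gamma(f_0,f_1))=\sum_{x\in V}\Gamma(f_0,f_1)(x)\mathfrak{m}(x)=\tfrac12\sum_{x,y\in V}(f_0(y)-f_0(x))(f_1(y)-f_1(x))\mathfrak{m}_{xy}$, which is the same expression; since it is manifestly symmetric in $f_0$ and $f_1$, the chain $(\mathcal{L}f_0,f_1)=\mathfrak{m}(\Gamma(f_0,f_1))=(f_0,\mathcal{L}f_1)$ follows at once.

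There is no serious obstacle here: the argument is a direct computation. The only point deserving a little care is the symmetrization step turning $\sum_{x,y\in\Omega}(f_0(y)-f_0(x))f_1(x)\mathfrak{m}_{xy}$ into the symmetric Dirichlet-form expression; everything else is bookkeeping, and one should make sure that the cancellation producing the difference quotients $f_0(x)-f_0(y)$ genuinely uses the stochasticity of $\mathcal{P}$, hence the defining property (\ref{eq:Perron Frobenius}) of the Perron measure.
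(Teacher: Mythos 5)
Your proof is correct: the stochasticity of $\mathcal{P}$ (via the Perron equation), the split over $y\in\Omega$ versus $y\in V\setminus\Omega$, and the symmetrization using $\mathfrak{m}_{xy}=\mathfrak{m}_{yx}$ are exactly the ingredients needed, and the ``in particular'' part follows as you say by taking $\Omega=V$ and comparing with the recorded pointwise formula for $\Gamma$. The paper itself does not reprove this statement but cites \cite[Proposition 2.4]{OSY1}, and your direct summation-by-parts computation is essentially that standard argument.
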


Let $P_t$ be the heat semigroup defined as \eqref{eq:heat semigroup} such that
$P_t f$ solves
\begin{equation*}
\begin{cases}
		\partial_t u=\Delta u,\\
		u|_{t=0}=f.
		\end{cases}
\end{equation*}
From Proposition \ref{prop:integration by parts}
we can derive
\begin{equation}\label{eq:symHSG}
(P_t f_{0},f_{1})=(f_{0},P_t f_{1}).
\end{equation}
The \textit{heat kernel measure $p^{t}_{x}$} is defined as
\begin{equation*}
p^{t}_{x}:=\frac{\mathfrak{m}}{\mathfrak{m}(x)}P_t \delta_x 
\end{equation*}
for the Dirac measure $\delta_x$ at $x$.
The equality \eqref{eq:symHSG} yields
\begin{equation}\label{eq:HSGHKM}
P_t f(x)=\sum_{y\in V}p^{t}_{x}(y) f(y).
\end{equation}
Furthermore,
we see $P_t 1_{V}=1_{V}$ by \eqref{eq:symHSG},
and hence $p^{t}_x$ is a probability measure.

\subsection{Optimal transport theory}\label{sec:OT}

We next recall the basics of the optimal transport theory (cf. \cite{V}).
For two probability measures $\nu_{0},\nu_{1}$ on $V$,
a probability measure $\pi :V\times V\to [0,\infty)$ is called a \textit{coupling of $(\nu_{0},\nu_{1})$} if
\begin{equation*}
\sum_{y \in V}\pi(x,y) =\nu_{0}(x),\quad \sum_{x \in V}\pi(x,y) = \nu_{1}(y).
\end{equation*}
Let $\Pi(\nu_{0},\nu_{1})$ stand for the set of all couplings of $(\nu_{0},\nu_{1})$.
The \textit{Wasserstein distance} from $\nu_{0}$ to $\nu_{1}$ is defined as
\begin{equation}\label{eq:Wasserstein distance}
W(\nu_{0},\nu_{1}):=\inf_{\pi \in \Pi(\nu_{0},\nu_{1})} \sum_{x,y \in V}d(x,y)\pi(x,y),
\end{equation}
which is a (non-symmetric) distance function on the set of all probability measures on $V$.

The following Kantorovich-Rubinstein duality formula is well-known (cf. \cite[Theorem 5.10 and Particular Cases 5.4 and 5.16]{V}):
\begin{prop}\label{prop:Kantorovich duality}
For any two probability measures $\nu_{0}, \nu_{1}$ on $V$, we have
\begin{equation*}
W(\nu_{0},\nu_{1}) = \sup_{f\in \Lip_{1}(V)} \sum_{x\in V} f(x)\left(  \nu_{1}(x)-\nu_{0}(x)  \right).
\end{equation*}
\end{prop}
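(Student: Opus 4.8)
The statement is the Kantorovich--Rubinstein duality for the cost $d$ on the finite set $V$, and since $V$ is finite I would prove it by elementary linear-programming duality, the only nontrivial point being the non-symmetry of $d$. First I would record that the infimum in \eqref{eq:Wasserstein distance} is attained, since $\Pi(\nu_{0},\nu_{1})$ is a nonempty (it contains $\nu_{0}\otimes\nu_{1}$), closed and bounded subset of $\mathbb{R}^{V\times V}$ on which $\pi\mapsto\sum_{x,y}d(x,y)\pi(x,y)$ is continuous; thus $W(\nu_{0},\nu_{1})$ is the optimal value of the finite linear program of minimizing $\sum_{x,y}d(x,y)\pi(x,y)$ over $\pi\ge 0$ subject to $\sum_{y}\pi(x,y)=\nu_{0}(x)$ and $\sum_{x}\pi(x,y)=\nu_{1}(y)$ for all $x,y$ (these constraints force $\pi$ to be a probability measure). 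One direction is then immediate: for any coupling $\pi$ and any $f\in\Lip_{1}(V)$,
\begin{equation*}
\sum_{x}f(x)(\nu_{1}(x)-\nu_{0}(x))=\sum_{x,y}(f(y)-f(x))\pi(x,y)\le\sum_{x,y}d(x,y)\pi(x,y),
\end{equation*}
since $f(y)-f(x)\le d(x,y)$ for all $x,y$ (this is $\Lip f\le 1$, with both sides $0$ when $x=y$); taking the infimum over $\pi$ and the supremum over $f$ gives $\sup_{f\in\Lip_{1}(V)}\sum_{x}f(x)(\nu_{1}(x)-\nu_{0}(x))\le W(\nu_{0},\nu_{1})$.

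For the reverse inequality I would pass to the LP dual: assigning multipliers $\psi(x)$ and $\phi(y)$ to the two families of equality constraints, the dual is to maximize $\sum_{x}\psi(x)\nu_{0}(x)+\sum_{y}\phi(y)\nu_{1}(y)$ over $\psi,\phi:V\to\mathbb{R}$ with $\psi(x)+\phi(y)\le d(x,y)$ for all $x,y$, and strong duality (the primal is feasible and bounded below by $0$) identifies its optimal value with $W(\nu_{0},\nu_{1})$. Given a dual-feasible pair $(\psi,\phi)$, replacing $\phi$ by the $c$-transform $\phi(y):=\min_{x}(d(x,y)-\psi(x))$ preserves feasibility and does not decrease the objective because $\nu_{1}\ge 0$; evaluating the minimum at $x=y$ gives $\phi(y)\le-\psi(y)$, so $\sum_{x}\psi(x)\nu_{0}(x)+\sum_{y}\phi(y)\nu_{1}(y)\le\sum_{x}\phi(x)(\nu_{1}(x)-\nu_{0}(x))$ because $\nu_{0}\ge 0$; and the triangle inequality $d(x',y)\le d(x',z)+d(z,y)$ for the directed distance gives $\phi(y)-\phi(z)\le d(z,y)$ for all $y,z$, i.e., $\phi\in\Lip_{1}(V)$. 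Hence $W(\nu_{0},\nu_{1})\le\sup_{f\in\Lip_{1}(V)}\sum_{x}f(x)(\nu_{1}(x)-\nu_{0}(x))$, and combining this with the previous bound gives the identity. Alternatively one may simply invoke the general Kantorovich--Rubinstein duality of \cite{V}, as the statement indicates.

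The only genuine obstacle is keeping the direction of the asymmetry straight: one must use the convention $\Lip f=\sup_{x\neq y}(f(y)-f(x))/d(x,y)$ consistently, so that the Lipschitz constraint reads $f(y)-f(x)\le d(x,y)$, matching ``mass leaving $x$ and arriving at $y$'' in a coupling, and the $c$-transform step must use the genuinely directed triangle inequality for $d$ rather than symmetry; once this is in place, the argument is the standard finite-dimensional strong-duality one.
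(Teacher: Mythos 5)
Your argument is correct: the easy inequality via $f(y)-f(x)\le d(x,y)$ integrated against a coupling, and the reverse inequality via finite LP strong duality followed by the $c$-transform $\phi(y)=\min_x(d(x,y)-\psi(x))$, which stays feasible, does not decrease the objective, satisfies $\phi(y)\le-\psi(y)$, and is $1$-Lipschitz by the directed triangle inequality $d(x^*,y)\le d(x^*,z)+d(z,y)$ — all steps check out, and you use the asymmetric conventions consistently. The paper, however, does not prove the proposition at all: it invokes Villani's general Kantorovich--Rubinstein duality (Theorem 5.10 and Particular Cases 5.4, 5.16 of \cite{V}) and only adds a remark that the symmetry of the cost is never used there, so the result applies verbatim to the directed distance. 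So your route is genuinely different in spirit: a self-contained, finite-dimensional proof that makes explicit exactly where the directed triangle inequality and the sign convention $\Lip f=\sup_{x\neq y}(f(y)-f(x))/d(x,y)$ enter, at the cost of a page of LP bookkeeping; the paper's citation buys brevity and rests on a duality theorem valid in far greater generality (Polish spaces, lower semicontinuous non-symmetric costs), which is more than is needed for a finite vertex set. Your closing sentence acknowledging the citation route shows you understand both options; for a finite graph your elementary proof is a perfectly acceptable, arguably more transparent, substitute.
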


\begin{rem}
In the discussion of \cite[Particular Cases 5.4]{V},
the symmetry for cost functions is not required,
and hence Proposition \ref{prop:Kantorovich duality} holds in our directed setting.
\end{rem}

\subsection{Ricci curvature}\label{sec:Ricci curvature}
In this subsection,
we recall the formulation of the Ricci curvature introduced in \cite{OSY1}.
For $\epsilon \in [0,1]$,
and for $x,y\in V$ with $x\neq y$,
we set
\begin{equation*}\label{eq:pre Ricci curvature}
\kappa_{\epsilon}(x,y):=1-\frac{W(\nu^{\epsilon}_{x},\nu^{\epsilon}_{y})}{d(x,y)},
\end{equation*}
where $\nu^{\epsilon}_{x}:V\to [0,1]$ is a probability measure defined by
\begin{equation*}
\nu^{\epsilon}_{x}(z)=(1-\epsilon)\delta_{x}(z)+\epsilon \,\mathcal{P}(x,z).
\end{equation*}
The authors \cite{OSY1} have introduced the \textit{Ricci curvature} as follows (see \cite[Definition 3.6]{OSY1}):
\begin{equation*}
\kappa(x,y):=\lim_{\epsilon\to 0}\frac{\kappa_{\epsilon}(x,y)}{\epsilon},
\end{equation*}
which is well-defined (see \cite[Lemmas 3.2 and 3.4, and Definition 3.6]{OSY1}).
In the undirected case,
this is nothing but the Lin-Lu-Yau Ricci curvature in \cite{LLY}.

We have the following representation formula,
which has been established by M\"unch-Wojciechowski \cite{MW} in the undirected case (see \cite[Theorem 3.10]{OSY1} and \cite[Theorem 2.1]{MW}):
\begin{thm}[\cite{MW}, \cite{OSY1}]\label{thm:limit free formula}
\begin{equation*}
\kappa(x,y)=\inf_{f\in \mathcal{F}_{xy}}  \nabla_{xy} \mathcal{L} f,
\end{equation*}
where
\begin{equation*}
\mathcal{F}_{xy}:=\{f\in \Lip_{1}(V)  \mid \nabla_{xy}f=1\}.
\end{equation*}
\end{thm}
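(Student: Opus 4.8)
The plan is to rewrite $W(\nu^{\epsilon}_{x},\nu^{\epsilon}_{y})$ by means of the Kantorovich--Rubinstein duality (Proposition \ref{prop:Kantorovich duality}) and then to study the limit $\epsilon\to 0$. First I would use the defining identity $\sum_{w\in V}\mathcal{P}(z,w)f(w)=f(z)-\mathcal{L}f(z)$ to compute, for every $z\in V$,
\begin{equation*}
\sum_{w\in V}f(w)\,\nu^{\epsilon}_{z}(w)=(1-\epsilon)f(z)+\epsilon\sum_{w\in V}\mathcal{P}(z,w)f(w)=f(z)-\epsilon\,\mathcal{L}f(z).
\end{equation*}
Applying Proposition \ref{prop:Kantorovich duality} to $\nu^{\epsilon}_{x}$ and $\nu^{\epsilon}_{y}$ and dividing by $d(x,y)$ should then give
\begin{equation*}
\frac{W(\nu^{\epsilon}_{x},\nu^{\epsilon}_{y})}{d(x,y)}=\sup_{f\in \Lip_{1}(V)}\left[\nabla_{xy}f-\epsilon\,\nabla_{xy}\mathcal{L}f\right],\qquad \frac{\kappa_{\epsilon}(x,y)}{\epsilon}=\inf_{f\in \Lip_{1}(V)}\left[\frac{1-\nabla_{xy}f}{\epsilon}+\nabla_{xy}\mathcal{L}f\right].
\end{equation*}
Since the bracket is invariant under adding constants to $f$, and $\{f\in \Lip_{1}(V)\mid f(x)=0\}$ is a compact subset of the finite-dimensional space $\mathcal{F}$, this last infimum is attained.

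Next I would treat the easy inequality. Each $f\in \mathcal{F}_{xy}$ has $\nabla_{xy}f=1$, so the singular term $(1-\nabla_{xy}f)/\epsilon$ drops out and the bracket equals $\nabla_{xy}\mathcal{L}f$; restricting the infimum to $\mathcal{F}_{xy}$ (which is non-empty, for instance $d(x,\cdot)\in \mathcal{F}_{xy}$) therefore yields $\kappa_{\epsilon}(x,y)/\epsilon\leq \nabla_{xy}\mathcal{L}f$ for every such $f$, and hence $\limsup_{\epsilon\to 0}\kappa_{\epsilon}(x,y)/\epsilon\leq \inf_{f\in \mathcal{F}_{xy}}\nabla_{xy}\mathcal{L}f$.

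For the reverse inequality I would argue by compactness. Pick $\epsilon_{n}\downarrow 0$ along which $\kappa_{\epsilon}(x,y)/\epsilon$ tends to its $\liminf$, and let $f_{n}\in \Lip_{1}(V)$ with $f_{n}(x)=0$ realise the corresponding infimum. The functional $f\mapsto \nabla_{xy}\mathcal{L}f$ is linear, hence bounded on the compact set of normalised $1$-Lipschitz functions, and $\kappa_{\epsilon_{n}}(x,y)/\epsilon_{n}$ is bounded; it follows that the nonnegative numbers $(1-\nabla_{xy}f_{n})/\epsilon_{n}$ stay bounded, so $\nabla_{xy}f_{n}\to 1$. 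Passing to a subsequence, $f_{n}\to f_{\infty}$ for some $f_{\infty}\in \Lip_{1}(V)$ with $\nabla_{xy}f_{\infty}=1$, i.e.\ $f_{\infty}\in \mathcal{F}_{xy}$, and discarding the nonnegative term gives
\begin{equation*}
\liminf_{\epsilon\to 0}\frac{\kappa_{\epsilon}(x,y)}{\epsilon}=\lim_{n\to\infty}\left[\frac{1-\nabla_{xy}f_{n}}{\epsilon_{n}}+\nabla_{xy}\mathcal{L}f_{n}\right]\geq \nabla_{xy}\mathcal{L}f_{\infty}\geq \inf_{f\in \mathcal{F}_{xy}}\nabla_{xy}\mathcal{L}f.
\end{equation*}
Combining the two bounds shows that $\lim_{\epsilon\to 0}\kappa_{\epsilon}(x,y)/\epsilon$ exists and equals $\inf_{f\in \mathcal{F}_{xy}}\nabla_{xy}\mathcal{L}f=\kappa(x,y)$. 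The step I expect to be the main obstacle is exactly this limit/infimum interchange: one must rule out minimisers $f_{n}$ whose quotient $\nabla_{xy}\mathcal{L}f_{n}$ slips below $\inf_{f\in \mathcal{F}_{xy}}\nabla_{xy}\mathcal{L}f$ while $\nabla_{xy}f_{n}$ is still strictly below $1$, and it is finite-dimensionality (compactness of the normalised $\Lip_{1}(V)$) together with the uniform bound on $\nabla_{xy}\mathcal{L}$ that excludes this. Everything else is a routine check that the computation and Proposition \ref{prop:Kantorovich duality} survive the absence of symmetry of $d$, as in the undirected treatment of \cite{MW}.
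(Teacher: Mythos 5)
The paper does not actually prove Theorem \ref{thm:limit free formula} --- it is imported from \cite{OSY1} (Theorem 3.10) and \cite{MW} (Theorem 2.1) --- and your argument is essentially the proof given in those references: rewrite $W(\nu^{\epsilon}_{x},\nu^{\epsilon}_{y})$ via Kantorovich--Rubinstein duality so that $\kappa_{\epsilon}(x,y)/\epsilon$ becomes $\inf_{f\in\Lip_{1}(V)}\bigl[(1-\nabla_{xy}f)/\epsilon+\nabla_{xy}\mathcal{L}f\bigr]$, then let $\epsilon\to 0$ using finite-dimensional compactness of the normalised $1$-Lipschitz functions. Your reasoning is correct; the only point asserted rather than checked is the boundedness of $\kappa_{\epsilon_{n}}(x,y)/\epsilon_{n}$, which indeed follows from ingredients you already have (an upper bound from testing with $d(x,\cdot)\in\mathcal{F}_{xy}$, and a lower bound by dropping the nonnegative term and minimising the linear functional $\nabla_{xy}\mathcal{L}$ over the compact normalised set).
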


\section{Heat flow}\label{sec:Heat flow}
In this section,
we give a proof of Theorem \ref{thm:RS}.

\subsection{Curvature bound and gradient estimate}\label{sec:CurvatureGradient}
In this subsection,
we show the equivalence of (\ref{enum:Curv}) and (\ref{enum:Grad}) in Theorem \ref{thm:RS}.
\begin{prop}\label{prop:RS1}
For $K\in \mathbb{R}$,
the following are equivalent:
\begin{enumerate}\setlength{\itemsep}{+0.7mm}
\item $\inf_{x\neq y}\kappa(x,y)\geq K$; \label{enum:Curv1}
\item for all $f:V\to \mathbb{R}$ and $t>0$, we have \eqref{eq:Grad}.\label{enum:Grad1}
\end{enumerate}
\end{prop}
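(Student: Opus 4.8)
The plan is to reduce both implications to elementary one-variable calculus along the heat flow, using throughout the limit-free representation $\kappa(x,y)=\inf_{f\in\mathcal{F}_{xy}}\nabla_{xy}\mathcal{L}f$ of Theorem~\ref{thm:limit free formula} together with the identity $\frac{d}{dt}\nabla_{xy}P_tf=\nabla_{xy}\Delta P_tf=-\nabla_{xy}\mathcal{L}P_tf$, which follows from $\partial_tP_tf=\Delta P_tf$.

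For \textbf{(\ref{enum:Grad1})$\,\Rightarrow\,$(\ref{enum:Curv1})} I would fix $x\neq y$ and $f\in\mathcal{F}_{xy}$ and examine the smooth function $\phi(t):=\nabla_{xy}P_tf$. Since $\Lip f\leq 1$ and $\nabla_{xy}f=1$, the gradient estimate \eqref{eq:Grad} forces $\phi(0)=1$ and $\phi(t)\leq\Lip P_tf\leq e^{-Kt}$ for all $t>0$. Comparing the difference quotients $\frac{\phi(t)-\phi(0)}{t}\leq\frac{e^{-Kt}-1}{t}$ and letting $t\to 0^+$ then gives $-\nabla_{xy}\mathcal{L}f=\phi'(0)\leq-K$, i.e.\ $\nabla_{xy}\mathcal{L}f\geq K$. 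Taking the infimum over $f\in\mathcal{F}_{xy}$ and then over $x\neq y$ and invoking Theorem~\ref{thm:limit free formula} yields $\inf_{x\neq y}\kappa(x,y)\geq K$. This direction is straightforward.

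For \textbf{(\ref{enum:Curv1})$\,\Rightarrow\,$(\ref{enum:Grad1})} I would set up a Grönwall argument for $\phi(t):=\Lip P_tf$. After discarding the trivial case of constant $f$ (and noting that $P_t$ fixes constants since $\mathcal{L}1_V=0$, so that $P_tf$, hence $\phi(t)$, stays nonzero for all $t$), I would use finiteness of $V$ to write $\phi(t)=\max_{x\neq y}\nabla_{xy}P_tf$ as a maximum of finitely many smooth functions of $t$; such a maximum is locally Lipschitz, and at a.e.\ $t$ its derivative equals $-\nabla_{x_0y_0}\mathcal{L}P_tf$ for any pair $(x_0,y_0)$ attaining it (the nonnegative difference $\phi(\cdot)-\nabla_{x_0y_0}P_{(\cdot)}f$ has a minimum at $t$, so its derivative vanishes there). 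For such a pair, $(P_tf)/\phi(t)$ lies in $\mathcal{F}_{x_0y_0}$ by positive homogeneity of $\Lip$, so Theorem~\ref{thm:limit free formula} gives $\nabla_{x_0y_0}\mathcal{L}P_tf\geq\phi(t)\,\kappa(x_0,y_0)\geq K\phi(t)$, whence $\phi'(t)\leq-K\phi(t)$ for a.e.\ $t$. Since $\phi$ is absolutely continuous, $t\mapsto e^{Kt}\phi(t)$ is non-increasing, and \eqref{eq:Grad} follows.

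The routine ingredients are the heat-equation identities ($\partial_tP_tf=-\mathcal{L}P_tf$, $P_t1_V=1_V$) and the positive homogeneity of $\Lip$. The step I expect to require the most care is the differentiation of $\phi(t)=\Lip P_tf$: one must justify the envelope (Danskin-type) formula for the derivative of a finite maximum of smooth functions and then pass from the a.e.\ inequality $\phi'\leq-K\phi$ to the monotonicity of $e^{Kt}\phi$. A clean way to package this is to argue with the upper Dini derivative $D^+\phi$, which satisfies $D^+\phi(t)\leq\max\{-\nabla_{xy}\mathcal{L}P_tf:(x,y)\ \text{attains}\ \phi(t)\}$, and then apply a Dini-derivative version of Grönwall's inequality; I expect this to be the only genuinely delicate point.
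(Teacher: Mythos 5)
Your proposal is correct, and it rests on exactly the paper's two pillars: the representation formula of Theorem~\ref{thm:limit free formula} and the identity $\partial_t\nabla_{xy}P_tf=-\nabla_{xy}\mathcal{L}P_tf$. The direction (\ref{enum:Grad1})$\Rightarrow$(\ref{enum:Curv1}) is the same as the paper's (difference quotient of $\nabla_{xy}P_tf$ at $t=0$ against $e^{-Kt}$, then infimum over $\mathcal{F}_{xy}$). For (\ref{enum:Curv1})$\Rightarrow$(\ref{enum:Grad1}) your packaging differs: the paper fixes a pair $(x,y)$, supposes $e^{Kt}\nabla_{xy}P_tf>\Lip f$ for some $t$, finds $t_0$ at which this quantity exceeds $\Lip f$ and has positive derivative, and contradicts the curvature bound by applying Theorem~\ref{thm:limit free formula} to $P_{t_0}f/\nabla_{xy}P_{t_0}f$; you instead run a direct Gr\"onwall argument on $\phi(t)=\Lip P_tf$, differentiating the finite maximum a.e.\ and using any active pair $(x_0,y_0)$ to get $\phi'(t)\le -K\phi(t)$. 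Both hinge on the same normalization trick, but your version has the merit of making explicit that the normalized function must be tested at a pair \emph{attaining} $\Lip P_tf$ — otherwise it need not belong to $\mathcal{F}_{x_0y_0}$ — a point the paper's write-up glosses over by normalizing with $\nabla_{xy}P_{t_0}f$ for an a priori arbitrary pair; your a.e.\ envelope argument (or, equivalently, choosing $t_0$ and the extremal pair jointly) is precisely what makes that step airtight. Two minor touch-ups: the claim that $\phi(t)>0$ for non-constant $f$ does not follow merely from $P_t1_V=1_V$; either invoke invertibility of $P_t=e^{t\Delta}$ (its inverse $e^{-t\Delta}$ preserves constants, so $P_tf$ cannot become constant), or note that at any zero of the nonnegative function $\phi$ where $\phi$ is differentiable one has $\phi'(t)=0=-K\phi(t)$, so the differential inequality and the Gr\"onwall step go through regardless; and, as you anticipate, the a.e.\ identity $\phi'(t)=-\nabla_{x_0y_0}\mathcal{L}P_tf$ should be justified exactly as you sketch (interior minimum of the smooth-minus-Lipschitz difference at points of differentiability), after which absolute continuity of $e^{Kt}\phi(t)$ yields \eqref{eq:Grad}.
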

\begin{proof}
We begin with the implication from (\ref{enum:Curv1}) to (\ref{enum:Grad1}).
For a fixed $x,y\in V$,
and $f:V\to \mathbb{R}$,
it is enough to prove
\begin{equation*}
\nabla_{xy}P_t f\leq e^{-Kt} \Lip f
\end{equation*}
for every $t>0$.
If $f$ is constant,
then this immediately follows from $P_t 1_{V}=1_{V}$.
Thus we may assume that $f$ is non-constant;
in particular,
$\Lip f>0$.
We will prove it by contradiction.
Suppose that there exists $t>0$ such that
\begin{equation*}
e^{Kt}\nabla_{xy}P_t f> \Lip f.
\end{equation*}
We here notice that
\begin{equation*}
e^{Kt}\nabla_{xy}P_t f|_{t=0}=\nabla_{xy}f\leq  \Lip f.
\end{equation*}
In this case,
there is $t_0>0$ such that
\begin{equation*}
e^{Kt_0}\nabla_{xy}P_{t_0} f> \Lip f,\quad \partial_t (e^{Kt}\nabla_{xy}P_t f)|_{t=t_{0}}>0.
\end{equation*}
Due to Theorem \ref{thm:limit free formula},
\begin{equation*}
0<\frac{e^{-Kt_{0}}}{\nabla_{xy}P_{t_{0}} f}\partial_t (e^{Kt}\nabla_{xy}P_t f)|_{t=t_{0}}=\left( K-\nabla_{xy}\mathcal{L}\frac{P_{t_{0}} f}{\nabla_{xy}P_{t_{0}} f}    \right)\leq \left( K-\kappa(x,y)    \right),
\end{equation*}
and hence $\kappa(x,y)<K$.
This contradicts with the curvature bound.

We next show the opposite direction.
By \eqref{eq:Grad},
for any $f\in \mathcal{F}_{xy}$,
\begin{equation*}
\nabla_{xy}\mathcal{L}f=-\partial_t \nabla_{xy}P_tf|_{t=0}=\lim_{t\to 0}\frac{1}{t}(\nabla_{xy}f-\nabla_{xy}P_t f)\geq \lim_{t\to 0}\frac{1}{t}(1-e^{-Kt})=K,
\end{equation*}
here we used $\Lip f=1$.
Theorem \ref{thm:limit free formula} leads us to the desired conclusion.
\end{proof}

\subsection{Gradient estimate and transportation inequality}\label{sec:GradientTransport}
Next,
we prove the equivalence of (\ref{enum:Grad}) and (\ref{enum:Transport}),
and conclude Theorem \ref{thm:RS}.
\begin{prop}\label{prop:RS2}
For $K\in \mathbb{R}$,
the following are equivalent:
\begin{enumerate}\setlength{\itemsep}{+0.7mm}
\item For all $f:V\to \mathbb{R}$ and $t>0$, we have \eqref{eq:Grad};\label{enum:Grad2}
\item for all $x,y\in V$ and $t>0$, we have \eqref{eq:Transport}.\label{enum:Transport2}
\end{enumerate}
\end{prop}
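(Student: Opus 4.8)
The plan is to establish the two implications separately, using the Kantorovich--Rubinstein duality (Proposition~\ref{prop:Kantorovich duality}) as the bridge between the gradient estimate on functions and the Wasserstein contraction on heat kernel measures.

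\medskip

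\noindent\textbf{From \eqref{enum:Grad2} to \eqref{enum:Transport2}.} Fix $x,y\in V$ and $t>0$. By duality,
\begin{equation*}
W(p^{t}_x,p^{t}_{y})=\sup_{f\in \Lip_{1}(V)}\sum_{z\in V} f(z)\left(p^{t}_{y}(z)-p^{t}_{x}(z)\right).
\end{equation*}
Using the identity \eqref{eq:HSGHKM}, the sum on the right equals $P_t f(y)-P_t f(x)$. Since $f$ is $1$-Lipschitz, $\nabla_{xy}P_t f\leq \Lip P_t f\leq e^{-Kt}\Lip f\leq e^{-Kt}$ by \eqref{eq:Grad}, so $P_t f(y)-P_t f(x)\leq e^{-Kt}d(x,y)$. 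Taking the supremum over $f\in\Lip_1(V)$ gives \eqref{eq:Transport}. This direction is essentially immediate once one recognizes that \eqref{eq:HSGHKM} converts the dual test functions into the heat semigroup acting on $f$.

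\medskip

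\noindent\textbf{From \eqref{enum:Transport2} to \eqref{enum:Grad2}.} Fix $f:V\to\mathbb{R}$ and $t>0$; we must bound $\nabla_{xy}P_t f$ for all $x\neq y$. If $f$ is constant the estimate is trivial, so assume $\Lip f>0$ and, by scaling, that $\Lip f=1$, i.e. $f\in\Lip_1(V)$. Then $P_t f(y)-P_t f(x)=\sum_{z\in V}f(z)\left(p^{t}_{y}(z)-p^{t}_{x}(z)\right)$ by \eqref{eq:HSGHKM}, and since $f\in\Lip_1(V)$ this is at most $W(p^{t}_x,p^{t}_{y})$ by duality (the supremum defining $W$ dominates the value at $f$). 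Applying \eqref{eq:Transport} yields $P_t f(y)-P_t f(x)\leq e^{-Kt}d(x,y)$, hence $\nabla_{xy}P_t f\leq e^{-Kt}=e^{-Kt}\Lip f$. Taking the supremum over $x\neq y$ gives \eqref{eq:Grad}.

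\medskip

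I expect no serious obstacle here: both directions are short applications of the duality formula together with \eqref{eq:HSGHKM}, and the only mild subtlety is the reduction to the case $\Lip f=1$ (handling constant $f$ and normalizing), together with the orientation bookkeeping forced by the asymmetry of $d$ and $W$ --- one must be careful that the dual formula in Proposition~\ref{prop:Kantorovich duality} pairs $f$ against $\nu_1-\nu_0$ in the order matching $W(\nu_0,\nu_1)$, so that the estimate on $\nabla_{xy}=\bigl(f(y)-f(x)\bigr)/d(x,y)$ comes out with the correct sign. Combining Proposition~\ref{prop:RS2} with Proposition~\ref{prop:RS1} then completes the proof of Theorem~\ref{thm:RS}.
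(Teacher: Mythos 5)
Your proof is correct and follows essentially the same route as the paper: both directions rest on the identity \eqref{eq:HSGHKM} together with the Kantorovich--Rubinstein duality, and your normalization to $\Lip f=1$ is just the paper's substitution $g=f/\Lip f$ in disguise. No gaps.
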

\begin{proof}
We start with the implication from (\ref{enum:Grad2}) to (\ref{enum:Transport2}).
By \eqref{eq:HSGHKM} and \eqref{eq:Grad},
it holds that
\begin{align*}
\sum_{z\in V}f(z)(p^{t}_y(z)-p^{t}_x(z))&=P_t f(y)-P_t f(x)\\
&\leq \Lip P_t f \, d(x,y)\leq e^{-Kt}\Lip f \,d(x,y)\leq e^{-Kt} \,d(x,y)
\end{align*}
for every $f\in \Lip_1(V)$.
With the help of Proposition \ref{prop:Kantorovich duality},
we arrive at \eqref{eq:Transport}.

We now consider the opposite one.
We fix $f:V\to \mathbb{R}$.
If $f$ is constant,
then the desired assertion is trivial by $P_t1_V=1_V$.
If $f$ is non-constant,
then $\Lip f$ is positive,
and hence we can define $g\in \Lip_1(V)$ by
\begin{equation*}
g:=\frac{f}{\Lip f}.
\end{equation*}
Therefore,
\eqref{eq:HSGHKM}, Proposition \ref{prop:Kantorovich duality} and \eqref{eq:Transport} imply
\begin{align*}
P_t f(y)-P_t f(x)&=\Lip f (P_t g(y)-P_t g(x))=\Lip f \sum_{z\in V}g(z)(p^{t}_y(z)-p^{t}_x(z))\\
                       &\leq \Lip f \,W(p^{t}_x,p^{t}_y)\leq e^{-Kt}\Lip f d(x,y).
\end{align*}
By dividing the both sides by $d(x,y)$,
we complete the proof.
\end{proof}

We are now in a position to conclude Theorem \ref{thm:RS}.

\begin{proof}[Proof of Theorem \ref{thm:RS}]
Theorem \ref{thm:RS} is a direct consequence of Propositions \ref{prop:RS1} and \ref{prop:RS2}.
\end{proof}

\subsection{Characterization via heat flow}
In the undirected case,
M\"unch-Wojciechowski \cite{MW} have formulated not only the characterization of lower Ricci curvature bound but also that of Ricci curvature itself (see \cite[Theorem 5.8]{MW}).
In our directed setting,
we also have the following characterization of M\"unch-Wojciechowski type:
\begin{thm}\label{thm:MWcharacterization}
\begin{equation*}
\kappa(x,y)=\lim_{t\to 0}\frac{1}{t}\left( 1-\frac{W(p^{t}_{x},p^{t}_{y})}{d(x,y)}  \right).
\end{equation*}
\end{thm}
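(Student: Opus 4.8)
The plan is to prove the two inequalities
\[
\liminf_{t\to 0}\frac{1}{t}\left(1-\frac{W(p^{t}_x,p^{t}_y)}{d(x,y)}\right)\geq \kappa(x,y)
\qquad\text{and}\qquad
\limsup_{t\to 0}\frac{1}{t}\left(1-\frac{W(p^{t}_x,p^{t}_y)}{d(x,y)}\right)\leq \kappa(x,y)
\]
separately, using the Kantorovich--Rubinstein duality (Proposition \ref{prop:Kantorovich duality}) and the representation formula of Theorem \ref{thm:limit free formula} as the two main tools.

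For the lower bound, I would take $K:=\kappa(x,y)$ for the fixed pair $x,y$ and run the argument of Proposition \ref{prop:RS1} locally: although $K$ need not be a global lower bound for the curvature, the proof-by-contradiction in Proposition \ref{prop:RS1} only uses the curvature at the single pair $(x,y)$, so it yields $\nabla_{xy}P_t f\leq e^{-Kt}\Lip f$ for every $f$ and every $t>0$ with this particular $K$. Then, exactly as in the proof of the implication (\ref{enum:Grad2})$\Rightarrow$(\ref{enum:Transport2}) in Proposition \ref{prop:RS2}, combining \eqref{eq:HSGHKM}, the duality formula and this gradient bound gives $W(p^{t}_x,p^{t}_y)\leq e^{-Kt}d(x,y)$. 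Hence $1-W(p^{t}_x,p^{t}_y)/d(x,y)\geq 1-e^{-Kt}$, and dividing by $t$ and letting $t\to 0$ produces the desired $\liminf\geq K=\kappa(x,y)$.

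For the upper bound, I would test the Wasserstein distance against a near-optimal Lipschitz function. By Theorem \ref{thm:limit free formula}, for any $\delta>0$ choose $f\in\mathcal F_{xy}$ with $\nabla_{xy}\mathcal L f\leq \kappa(x,y)+\delta$. Since $f\in\Lip_1(V)$ and $\nabla_{xy}f=1$, duality gives
\[
\frac{W(p^{t}_x,p^{t}_y)}{d(x,y)}\geq \frac{1}{d(x,y)}\sum_{z\in V}f(z)\left(p^{t}_y(z)-p^{t}_x(z)\right)=\frac{P_tf(y)-P_tf(x)}{d(x,y)}=\nabla_{xy}P_tf.
\]
Now Taylor-expand: $\nabla_{xy}P_tf=\nabla_{xy}f+t\,\partial_t\nabla_{xy}P_tf|_{t=0}+o(t)=1-t\,\nabla_{xy}\mathcal L f+o(t)$, using $\partial_t P_tf|_{t=0}=\Delta f=-\mathcal L f$ and the finiteness of $V$ (so $P_tf$ is real-analytic in $t$ and all derivatives pass through the finite sum defining $\nabla_{xy}$). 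Therefore
\[
\frac{1}{t}\left(1-\frac{W(p^{t}_x,p^{t}_y)}{d(x,y)}\right)\leq \frac{1}{t}\left(1-\nabla_{xy}P_tf\right)=\nabla_{xy}\mathcal L f+o(1)\leq \kappa(x,y)+\delta+o(1),
\]
and letting $t\to 0$ and then $\delta\to 0$ yields $\limsup\leq\kappa(x,y)$.

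The main obstacle is making the upper-bound Taylor expansion fully rigorous, i.e. justifying that $\nabla_{xy}P_tf = 1 - t\,\nabla_{xy}\mathcal L f + o(t)$ with a genuine $o(t)$ error rather than a one-sided estimate; this is where finiteness of $V$ is essential, since it makes $t\mapsto P_t f(z)$ an entire function for each $z$ with a convergent power series $P_tf=\sum_{k\geq 0}(-t)^k\mathcal L^k f/k!$, so the error term is controlled uniformly. A secondary point to be careful about is that in the lower-bound step one must check that the contradiction argument of Proposition \ref{prop:RS1} truly localizes to the single pair $(x,y)$ — but inspection of that proof shows the curvature hypothesis is invoked only through $\kappa(x,y)$, so this goes through verbatim with $K=\kappa(x,y)$.
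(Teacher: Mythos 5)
Your $\limsup$ half is correct: testing Proposition \ref{prop:Kantorovich duality} with a near-minimizer $f\in\mathcal F_{xy}$ of $\nabla_{xy}\mathcal L f$ and expanding $P_tf=f-t\mathcal Lf+O(t^2)$ (legitimate on a finite graph) is exactly the easy half of the argument of \cite[Theorem 5.8]{MW}, to which the paper defers for this statement. The gap is in the $\liminf$ half: the proof of the implication from the curvature bound to the gradient estimate in Proposition \ref{prop:RS1} does \emph{not} localize to a single pair. At the decisive step one applies Theorem \ref{thm:limit free formula} to $g:=P_{t_0}f/\nabla_{xy}P_{t_0}f$, and this requires $g\in\mathcal F_{xy}$, i.e. $\Lip P_{t_0}f\le\nabla_{xy}P_{t_0}f$ --- that the fixed pair $(x,y)$ realizes the Lipschitz constant of $P_{t_0}f$ at the later time $t_0$. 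Under the global hypothesis $\inf_{u\neq v}\kappa(u,v)\ge K$ this is harmless, since one may instead track $e^{Kt}\Lip P_tf$ and invoke the curvature bound at a pair attaining the maximum at time $t_0$, which is in general a \emph{different} pair. With curvature information only at $(x,y)$, neither option is available: nothing forces $g$ to be $1$-Lipschitz, and you cannot switch to the maximizing pair because you have no bound there. Hence the claimed pairwise estimate $W(p^t_x,p^t_y)\le e^{-\kappa(x,y)t}d(x,y)$ is not established, and it should not be expected to hold for all $t$: by duality $W(p^t_x,p^t_y)$ is a supremum over all $1$-Lipschitz functions, whose evolution under $P_t$ is influenced by the curvature at other pairs; only the first-order behaviour in $t$ at $(x,y)$ is governed by $\kappa(x,y)$.

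The repair is that for the $\liminf$ you only need a first-order estimate, which can be obtained from optimal Kantorovich potentials (this is in the spirit of the omitted proof via \cite[Theorem 5.8]{MW}). For each small $t$ choose, by compactness, $f_t\in\Lip_1(V)$ with $f_t(x)=0$ attaining the supremum in Proposition \ref{prop:Kantorovich duality} for the pair $(p^t_x,p^t_y)$. Then, using \eqref{eq:HSGHKM} and the uniform expansion $P_tf_t=f_t-t\mathcal Lf_t+O(t^2)$ together with $\nabla_{xy}f_t\le 1$,
\begin{equation*}
\frac{1}{t}\left(1-\frac{W(p^{t}_{x},p^{t}_{y})}{d(x,y)}\right)
=\frac{1}{t}\left(1-\nabla_{xy}P_tf_t\right)
\ge \nabla_{xy}\mathcal L f_t+O(t).
\end{equation*}
Your $\limsup$ computation gives $\nabla_{xy}P_tf_t=W(p^t_x,p^t_y)/d(x,y)\to 1$, hence $\nabla_{xy}f_t\to 1$; so every subsequential limit $f_0$ of $(f_t)$ lies in $\mathcal F_{xy}$, and $\nabla_{xy}\mathcal Lf_t\to\nabla_{xy}\mathcal Lf_0\ge\kappa(x,y)$ by Theorem \ref{thm:limit free formula}. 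This yields $\liminf\ge\kappa(x,y)$ and, combined with your correct upper bound, proves the theorem.
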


We can prove Theorem \ref{thm:MWcharacterization} by the same argument as in the proof of \cite[Theorem 5.8]{MW}.
We omit the proof.

\section{Concentration of measure}\label{sec:Concentration}
In this section,
we will prove Theorem \ref{thm:concentration} along the line of the proof of \cite[Theorem 3.1]{JMR}.

\subsection{Laplace functional estimates}\label{sec:Laplace functional}
For $\lambda \geq 0$,
the \textit{Laplace functional (or moment generating functional)} is defined as follows (see e.g., \cite[Subsection 1.6]{L}, \cite[Subsection 2.1]{BLM}):
\begin{equation*}
E(\lambda):=\sup_{f\in \Lip_1(V)}\mathfrak{m}(e^{\lambda f}),
\end{equation*}
where the supremum is taken over all $f\in \Lip_1(V)$ with $\mathfrak{m}(f)=0$.
In this subsection,
we give an upper bound of the Laplace functional under the same setting as in Theorem \ref{thm:concentration}.
To do so,
we prepare the following lemma (cf. \cite{S}):
\begin{lem}\label{lem:prepare}
For all $\lambda \geq 0$ and $f:V\to \mathbb{R}$ we have
\begin{equation*}\label{eq:prepare1}
\mathfrak{m}(\Gamma(f,e^{\lambda f}))\leq \lambda(e^{\lambda f},\Gamma(f)).
\end{equation*}
\end{lem}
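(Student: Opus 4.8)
The plan is to expand both sides of the claimed inequality by means of the edge-weight representation of $\Gamma$ and then to reduce everything to an elementary one-variable estimate, the key structural input being the symmetry $\mathfrak{m}_{xy}=\mathfrak{m}_{yx}$.

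First I would rewrite the two quantities. Using $\Gamma(f_0,f_1)(x)=\tfrac12\sum_{y\in V}(f_0(y)-f_0(x))(f_1(y)-f_1(x))\mathcal{P}(x,y)$ together with $\mathfrak{m}_{xy}=\mathfrak{m}(x)\mathcal{P}(x,y)$ and $(g,h)=\mathfrak{m}(gh)$, one gets
\begin{equation*}
\mathfrak{m}(\Gamma(f,e^{\lambda f}))=\frac12\sum_{x,y\in V}\big(f(y)-f(x)\big)\big(e^{\lambda f(y)}-e^{\lambda f(x)}\big)\,\mathfrak{m}_{xy},
\end{equation*}
and, after swapping the roles of $x$ and $y$ in half of the sum and using $\mathfrak{m}_{xy}=\mathfrak{m}_{yx}$,
\begin{align*}
\lambda\,(e^{\lambda f},\Gamma(f))
&=\frac{\lambda}{2}\sum_{x,y\in V}e^{\lambda f(x)}\big(f(y)-f(x)\big)^2\,\mathfrak{m}_{xy}\\
&=\frac{\lambda}{4}\sum_{x,y\in V}\big(e^{\lambda f(x)}+e^{\lambda f(y)}\big)\big(f(y)-f(x)\big)^2\,\mathfrak{m}_{xy}.
\end{align*}

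Since every weight $\mathfrak{m}_{xy}$ is nonnegative, it then suffices to establish the termwise inequality: for all $a,b\in\mathbb{R}$ and $\lambda\ge 0$,
\begin{equation*}
(b-a)\big(e^{\lambda b}-e^{\lambda a}\big)\le \frac{\lambda}{2}\big(e^{\lambda a}+e^{\lambda b}\big)(b-a)^2 .
\end{equation*}
Both sides are symmetric in $a,b$ and vanish when $a=b$ or $\lambda=0$, so I may assume $a<b$ and $\lambda>0$; dividing by $(b-a)e^{\lambda a}>0$ and putting $u:=\lambda(b-a)>0$, the inequality becomes $2(e^{u}-1)\le u(1+e^{u})$. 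This I would verify with $\psi(u):=u+2+(u-2)e^{u}$, noting $\psi(0)=0$, $\psi'(u)=1+(u-1)e^{u}$ with $\psi'(0)=0$, and $\psi''(u)=ue^{u}\ge 0$ for $u\ge 0$; hence $\psi'\ge 0$ and then $\psi\ge 0$ on $[0,\infty)$, which is exactly the needed inequality. (Equivalently, it records the fact that the trapezoidal rule overestimates $\int_a^b\lambda e^{\lambda t}\,dt$, the integrand being convex when $\lambda\ge 0$.)

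Summing the termwise inequality against the weights $\mathfrak{m}_{xy}$ and dividing by $2$ then matches the two displayed identities and yields the lemma. I do not expect a genuine obstacle here; the one point that needs care is performing the symmetrization of the $\Gamma(f)$-term before the termwise comparison, since the naive pointwise bound $(b-a)(e^{\lambda b}-e^{\lambda a})\le\lambda e^{\lambda a}(b-a)^2$ actually fails for $b>a$, so the symmetry $\mathfrak{m}_{xy}=\mathfrak{m}_{yx}$ is doing essential work.
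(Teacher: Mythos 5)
Your proposal is correct and follows essentially the same route as the paper: expand both sides via the edge-weight representation of $\Gamma$, exploit the symmetry $\mathfrak{m}_{xy}=\mathfrak{m}_{yx}$, and reduce to the trapezoidal bound $\frac{e^{s}-e^{t}}{s-t}\leq\frac{e^{s}+e^{t}}{2}$, which the paper cites from Ledoux while you verify it directly by a short calculus argument. The only cosmetic difference is that you symmetrize the right-hand side and compare termwise over all pairs, whereas the paper first restricts the left-hand side to pairs with $f(y)>f(x)$ and then resymmetrizes; the substance is identical.
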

\begin{proof}
We see
\begin{align*}
\mathfrak{m}(\Gamma(f,e^{\lambda f}))&=\frac{1}{2}\sum_{x,y\in V}(f(y)-f(x))(e^{\lambda f(y)}-e^{\lambda f(x)})\mathfrak{m}_{xy}\\
&=\sum_{f(y)>f(x)}(f(y)-f(x))(e^{\lambda f(y)}-e^{\lambda f(x)})\mathfrak{m}_{xy}.
\end{align*}
We now recall the following elementary inequality (see e.g., \cite[Corollary 5.8]{L}):
For all $s>t$,
\begin{equation*}
\frac{e^s-e^t}{s-t}\leq \frac{e^s+e^t}{2}.
\end{equation*}
If $f(y)>f(x)$,
then
\begin{equation*}
\frac{e^{\lambda f(y)}-e^{\lambda f(x)}}{f(y)-f(x)}\leq  \lambda \,\frac{e^{\lambda f(y)}+e^{\lambda f(x)}}{2}.
\end{equation*}
It follows that
\begin{align*}
\mathfrak{m}(\Gamma(f,e^{f}))&\leq \frac{\lambda}{2} \sum_{f(y)>f(x)}\left(  e^{\lambda f(y)}+e^{\lambda f(x)} \right)\,(f(y)-f(x))^2\mathfrak{m}_{xy}\\
                           &= \frac{\lambda}{2} \sum_{x,y\in V}e^{\lambda f(y)}\,(f(y)-f(x))^2\mathfrak{m}_{xy}=\lambda(e^{\lambda f},\Gamma(f)).
\end{align*}
This proves the lemma.
\end{proof}

We now state the desired assertion:
\begin{prop}\label{prop:Laplace estimate}
For $K>0$ and $\Lambda \geq 1$,
we assume $\inf_{x\neq y}\kappa(x,y)\geq K$ and $\sup_{x\in V}\mathcal{D}_x\leq \Lambda$.
Then for every $\lambda \geq 0$,
\begin{equation*}
E(\lambda)\leq e^{\frac{\lambda^2 \Lambda^2}{4K}}.
\end{equation*}
\end{prop}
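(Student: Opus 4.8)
The plan is to run the semigroup-interpolation argument of Ledoux used in \cite{S} and \cite{JMR}. Fix $f\in\Lip_1(V)$ with $\mathfrak{m}(f)=0$ (the constant case $f\equiv 0$ and $\lambda=0$ being trivial), set $u_t:=P_t f$, and study the evolution of
\[
H(t):=\mathfrak{m}(e^{\lambda u_t})
\]
from $t=0$, where $H(0)=\mathfrak{m}(e^{\lambda f})$, to $t=\infty$. Everything in sight is smooth in $t$ since $V$ is finite and $P_t=e^{-t\mathcal{L}}$.

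First I would differentiate. Using $\partial_t u_t=\Delta u_t=-\mathcal{L}u_t$ and the integration-by-parts identity $(\mathcal{L}f_0,f_1)=\mathfrak{m}(\Gamma(f_0,f_1))$ from Proposition \ref{prop:integration by parts},
\[
H'(t)=\lambda\,(\Delta u_t,e^{\lambda u_t})=-\lambda\,\mathfrak{m}\big(\Gamma(u_t,e^{\lambda u_t})\big).
\]
Applying Lemma \ref{lem:prepare} to the function $u_t$ gives $\mathfrak{m}(\Gamma(u_t,e^{\lambda u_t}))\le\lambda\,(e^{\lambda u_t},\Gamma(u_t))$, hence
\[
H'(t)\ge-\lambda^2\,\mathfrak{m}\big(e^{\lambda u_t}\,\Gamma(u_t)\big).
\]
Next I would bound $\Gamma(u_t)$ pointwise. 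From $\Gamma(g)(x)=\tfrac12\sum_{y\in\mathcal{N}_x}(g(y)-g(x))^2\mathcal{P}(x,y)$, the observation that for $y\in\mathcal{N}_x$ one has $|g(y)-g(x)|\le(\Lip g)\,\mathcal{D}(x,y)\le(\Lip g)\,\mathcal{D}_x$ (this is exactly where the asymmetry of $d$ is absorbed into $\mathcal{D}_x$), and $\sum_{y}\mathcal{P}(x,y)=1$, one gets $\Gamma(g)(x)\le\tfrac12(\Lip g)^2\mathcal{D}_x^2\le\tfrac12(\Lip g)^2\Lambda^2$. Taking $g=u_t$ and using the gradient estimate from Theorem \ref{thm:RS} (equivalence of \eqref{enum:Curv} and \eqref{enum:Grad}), namely $\Lip u_t=\Lip P_t f\le e^{-Kt}\Lip f\le e^{-Kt}$, this yields $\Gamma(u_t)\le\tfrac12 e^{-2Kt}\Lambda^2$ uniformly on $V$, so that
\[
H'(t)\ge-\frac{\lambda^2\Lambda^2}{2}\,e^{-2Kt}\,H(t),
\qquad\text{i.e.}\qquad
(\log H)'(t)\ge-\frac{\lambda^2\Lambda^2}{2}\,e^{-2Kt}
\]
since $H>0$ throughout.

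Finally I would integrate over $(0,\infty)$. Because $V$ is finite and strongly connected, $0$ is a simple eigenvalue of the $\mathfrak{m}$-symmetric operator $\mathcal{L}$ (as $\Gamma(g)\equiv0$ forces $g$ constant), so $P_t f\to\mathfrak{m}(f)=0$ pointwise as $t\to\infty$; hence $H(t)\to\mathfrak{m}(1)=1$ and $\log H(\infty)=0$. Therefore
\[
-\log\mathfrak{m}(e^{\lambda f})=\log H(\infty)-\log H(0)\ge-\frac{\lambda^2\Lambda^2}{2}\int_0^\infty e^{-2Kt}\,dt=-\frac{\lambda^2\Lambda^2}{4K},
\]
which rearranges to $\mathfrak{m}(e^{\lambda f})\le e^{\lambda^2\Lambda^2/(4K)}$; taking the supremum over admissible $f$ gives the claim. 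The only step needing a word of care is the long-time limit $H(\infty)=1$, i.e. ergodicity of the heat flow toward the mean; the rest is a routine assembly of Lemma \ref{lem:prepare}, the pointwise control of $\Gamma$ through $\mathcal{D}_x$, and the gradient estimate of Theorem \ref{thm:RS}.
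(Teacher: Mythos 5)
Your proposal is correct and is essentially the paper's own argument: differentiate $\mathfrak{m}(e^{\lambda P_t f})$, use the integration by parts formula together with Lemma \ref{lem:prepare}, bound $\Gamma(P_t f)$ by $\tfrac12 \Lambda^2 e^{-2Kt}$ via the gradient estimate of Theorem \ref{thm:RS} and $\mathcal{D}_x\leq\Lambda$, integrate the resulting differential inequality, and let $t\to\infty$. Your added justifications (the pointwise bound on $\Gamma$ using $\sum_y\mathcal{P}(x,y)=1$, and the ergodic limit $P_t f\to\mathfrak{m}(f)=0$ giving $\mathfrak{m}(e^{\lambda P_t f})\to 1$) are exactly the details the paper leaves implicit, and they are correct.
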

\begin{proof}
Let $f\in \Lip_{1}(V)$ with $\mathfrak{m}(f)=0$.
Using Proposition \ref{prop:integration by parts} and Lemma \ref{lem:prepare},
we obtain
\begin{equation*}
\partial_t \, \mathfrak{m}(e^{\lambda P_t f})=-\lambda\,(\mathcal{L}P_t f,e^{\lambda P_t f})=-\lambda \, \mathfrak{m}(\Gamma(P_t f,e^{\lambda P_t f}))\geq -\lambda^2 (e^{\lambda P_t f},\Gamma(P_t f)).
\end{equation*}
By virtue of Theorem \ref{thm:RS},
\begin{align*}
\Gamma(P_t f)&=\frac{1}{2}\sum_{x,y\in V} (P_t f(y)-P_t f(x))^2\mathfrak{m}_{xy}\\
&\leq \frac{1}{2}(\Lip P_t f)^2 \sum_{x,y\in V}\mathcal{D}(x,y)^2 \mathfrak{m}_{xy}\leq \frac{1}{2}\Lambda^2 \,e^{-2Kt}(\Lip f)^2\leq \frac{1}{2}\Lambda^2 \,e^{-2Kt}.
\end{align*}
By combining the above inequalities,
\begin{equation*}
\partial_t \, \mathfrak{m}(e^{\lambda P_t f})\geq -\frac{\lambda^2 \Lambda^2}{2} e^{-2Kt}\mathfrak{m}(e^{\lambda P_t f}),
\end{equation*}
and hence
\begin{equation*}
\log \mathfrak{m}(e^{\lambda P_t f})-\log \mathfrak{m}(e^{\lambda f}) \geq \frac{\lambda^2 \Lambda^2}{4K}\left(e^{-2Kt}-1\right).
\end{equation*}
By letting $t\to \infty$ we arrive at
\begin{equation*}
\mathfrak{m}(e^{\lambda f})\leq e^{\frac{\lambda^2 \Lambda^2}{4K}}.
\end{equation*}
Here we used $\mathfrak{m}(e^{\lambda P_t f})\to 1$ as $t\to \infty$,
which is a consequence of $\mathfrak{m}(f)=0$ and the fact that $\mathfrak{m}$ is a probability measure.
This completes the proof.
\end{proof}

\subsection{Concentration inequalities}\label{sec:Concentration inequalities}

Let us recall the following Chernoff bounding method (see e.g., \cite[Proposition 1.14]{L}, \cite[Subsection 2.1]{BLM}):
\begin{prop}\label{prop:Chernoff}
Let $c>0$.
If 
\begin{equation*}
E(\lambda)\leq e^{\frac{\lambda^2}{2c}}
\end{equation*}
for all $\lambda \geq 0$,
then
\begin{equation*}
\mathfrak{m}(\{f\geq \mathfrak{m}(f)+ r\})\leq e^{-\frac{c r^2}{2}}.
\end{equation*}
\end{prop}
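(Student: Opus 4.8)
The plan is to combine the exponential Chebyshev (Chernoff) inequality with the given Laplace functional bound. First I would reduce to the centered case: given a $1$-Lipschitz function $f$, set $g := f - \mathfrak{m}(f)$. Then $g \in \Lip_1(V)$ (a constant shift does not change the gradient, hence not the Lipschitz constant), $\mathfrak{m}(g) = 0$, and $\{f \geq \mathfrak{m}(f) + r\} = \{g \geq r\}$, so neither side of the claimed inequality is affected. In particular $g$ lies in the class over which $E(\lambda)$ is the supremum, so that $\mathfrak{m}(e^{\lambda g}) \leq E(\lambda)$ for every $\lambda \geq 0$.

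Next, for any $\lambda \geq 0$ I would apply Markov's inequality to the nonnegative function $e^{\lambda g}$. Since $\{g \geq r\} \subseteq \{e^{\lambda g} \geq e^{\lambda r}\}$, this gives
\[
\mathfrak{m}(\{g \geq r\}) \leq e^{-\lambda r}\, \mathfrak{m}(e^{\lambda g}) \leq e^{-\lambda r}\, E(\lambda) \leq e^{-\lambda r + \frac{\lambda^2}{2c}},
\]
where the second inequality is the centering reduction above and the third is the hypothesis of the proposition.

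Finally I would optimize over $\lambda \geq 0$. Assuming $r \geq 0$ (the relevant range; for $r = 0$ the statement is the trivial $\mathfrak{m}(\cdot)\leq 1$), the exponent $\varphi(\lambda) := -\lambda r + \lambda^2/(2c)$ is a convex parabola minimized at the admissible point $\lambda = cr$, where $\varphi(cr) = -cr^2/2$. Substituting $\lambda = cr$ into the displayed inequality yields $\mathfrak{m}(\{g \geq r\}) \leq e^{-cr^2/2}$, which by the first step is exactly $\mathfrak{m}(\{f \geq \mathfrak{m}(f) + r\}) \leq e^{-cr^2/2}$. There is no genuine obstacle here; the only points to verify are that centering preserves membership in the class defining $E(\lambda)$ and that the optimal $\lambda = cr$ is nonnegative, which is precisely why the hypothesis is imposed only for $\lambda \geq 0$.
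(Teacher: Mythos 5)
Your argument is correct and is exactly the standard Chernoff bounding argument (center the Lipschitz function, apply Markov's inequality to $e^{\lambda g}$, then optimize at $\lambda=cr\geq 0$); the paper itself gives no proof of this proposition, simply citing Ledoux and Boucheron--Lugosi--Massart, where this is precisely the argument given. So there is nothing to add beyond noting that your two verification points (centering preserves the class defining $E(\lambda)$, and the optimizer $\lambda=cr$ is admissible) are indeed the only things to check.
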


Now,
one can now derive Theorem \ref{thm:concentration}.

\begin{proof}[Proof of Theorem \ref{thm:concentration}]
Proposition \ref{prop:Laplace estimate} together with Proposition \ref{prop:Chernoff} with $c=2K/\Lambda^2$ implies Theorem \ref{thm:concentration}.
\end{proof}

\section{Functional inequalities}\label{sec:Functional inequality}
Here we discuss several functional inequalities,
and give another proof of Theorem \ref{thm:concentration}.

\subsection{Transportation-information inequality}\label{sec:Transport information inequality}

In this subsection,
we examine a transportation-information inequality.
We first show the following lemma (cf. \cite[Lemma 5.1]{FS}):
\begin{lem}\label{lem:element}
For $K>0$ and $\Lambda \geq 1$,
we assume $\inf_{x\neq y}\kappa(x,y)\geq K$ and $\sup_{x\in V}\mathcal{D}_x\leq \Lambda$.
Then for every probability density $\rho:V\to [0,\infty)$ $($i.e., $\mathfrak{m}(\rho)=1$$)$ we have
\begin{equation*}
W(\mathfrak{m},\rho\mathfrak{m})\leq \frac{\Lambda}{2K}\sum_{x,y\in V} \vert \rho(y)-\rho(x)\vert \mathfrak{m}_{xy}.
\end{equation*}
\end{lem}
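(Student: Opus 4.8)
The plan is to use the heat flow to interpolate between $\rho\mathfrak{m}$ at time $0$ and $\mathfrak{m}$ at time $\infty$, controlling the Wasserstein distance along the way by the Kantorovich-Rubinstein duality (Proposition \ref{prop:Kantorovich duality}) together with the gradient estimate of Theorem \ref{thm:RS}. Concretely, set $\rho_t:=P_t\rho$, so that $\rho_t\mathfrak{m}$ is a curve of probability measures with $\rho_0\mathfrak{m}=\rho\mathfrak{m}$ and $\rho_\infty\mathfrak{m}=\mathfrak{m}$ (since $\mathfrak{m}(\rho)=1$ forces $P_t\rho\to 1_V$). I would first estimate the instantaneous speed $\frac{d}{dt}W(\mathfrak{m},\rho_t\mathfrak{m})$, or rather bound $W(\rho_s\mathfrak{m},\rho_t\mathfrak{m})$ for $s<t$ and integrate.

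For the speed estimate, I would fix $f\in\Lip_1(V)$ and compute
\begin{equation*}
\frac{d}{dt}\sum_{x\in V}f(x)\rho_t(x)\mathfrak{m}(x)=\sum_{x\in V}f(x)\,\Delta\rho_t(x)\,\mathfrak{m}(x)=-(f,\mathcal{L}\rho_t)=-\mathfrak{m}(\Gamma(f,\rho_t)),
\end{equation*}
using Proposition \ref{prop:integration by parts}. Expanding $\Gamma$ gives $-\frac12\sum_{x,y}(f(y)-f(x))(\rho_t(y)-\rho_t(x))\mathfrak{m}_{xy}$; bounding $|f(y)-f(x)|\le\mathcal{D}(x,y)\le\Lambda$ for $y\in\mathcal{N}_x$ (and noting $\mathfrak{m}_{xy}=0$ otherwise) yields
\begin{equation*}
\left|\frac{d}{dt}\sum_{x\in V}f(x)\rho_t(x)\mathfrak{m}(x)\right|\le\frac{\Lambda}{2}\sum_{x,y\in V}|\rho_t(y)-\rho_t(x)|\,\mathfrak{m}_{xy}.
\end{equation*}
Taking the supremum over $f\in\Lip_1(V)$ and invoking Proposition \ref{prop:Kantorovich duality}, this controls $\frac{d}{dt}W(\rho_s\mathfrak{m},\rho_t\mathfrak{m})$-type quantities, so integrating in $t$ from $0$ to $\infty$ gives
\begin{equation*}
W(\mathfrak{m},\rho\mathfrak{m})\le\frac{\Lambda}{2}\int_0^\infty\sum_{x,y\in V}|\rho_t(y)-\rho_t(x)|\,\mathfrak{m}_{xy}\,dt.
\end{equation*}

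The remaining point is to show the time-integral of the $\ell^1$-type Dirichlet quantity $A(t):=\sum_{x,y}|P_t\rho(y)-P_t\rho(x)|\mathfrak{m}_{xy}$ is bounded by $\frac{1}{K}A(0)=\frac1K\sum_{x,y}|\rho(y)-\rho(x)|\mathfrak{m}_{xy}$. For this I expect to need exponential decay $A(t)\le e^{-Kt}A(0)$, which should follow from the gradient estimate in Theorem \ref{thm:RS}: indeed $|P_t\rho(y)-P_t\rho(x)|=|\nabla_{xy}P_t\rho|\,d(x,y)\le e^{-Kt}(\Lip\rho)\,d(x,y)$ is too crude (it reintroduces $\Lip\rho$ rather than the Dirichlet sum), so instead I would differentiate $A(t)$ directly, or better, run the heat flow on the edge-differences: writing $A(t)$ as an integral and using that $P_t$ is a contraction semigroup that commutes appropriately with differencing, combined with the curvature-driven estimate $\Lip P_t\rho\le e^{-Kt}\Lip\rho$ applied locally. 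The cleanest route is probably to show $\partial_t A(t)\le -K A(t)$ using the representation $A(t)=2\mathfrak{m}(\Gamma(\psi_t,P_t\rho))$ for a suitable sign function $\psi_t$ and Theorem \ref{thm:limit free formula} / Theorem \ref{thm:RS}; then Grönwall gives $A(t)\le e^{-Kt}A(0)$ and $\int_0^\infty A(t)\,dt\le A(0)/K$, completing the estimate $W(\mathfrak{m},\rho\mathfrak{m})\le\frac{\Lambda}{2K}\sum_{x,y}|\rho(y)-\rho(x)|\mathfrak{m}_{xy}$.

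The main obstacle I anticipate is precisely this exponential decay of the $\ell^1$ Dirichlet-type functional $A(t)$: the gradient estimate of Theorem \ref{thm:RS} is phrased in terms of the Lipschitz constant (an $\ell^\infty$ quantity), and transferring it to control the edge-wise $\ell^1$ sum without losing the geometry requires care — one must avoid the naive bound that replaces the sum $\sum_{x,y}|\rho(y)-\rho(x)|\mathfrak{m}_{xy}$ by $(\Lip\rho)\cdot\text{const}$. I would handle this by working edge-by-edge with the absolute value smoothed or via the subdifferential of $t\mapsto A(t)$, reducing to the differential inequality $\partial_t|P_t\rho(y)-P_t\rho(x)|\le\ldots$ and then summing against $\mathfrak{m}_{xy}$ using the self-adjointness of $\mathcal{L}$ and the curvature bound in the limit-free form $\nabla_{xy}\mathcal{L}g\ge K\,\nabla_{xy}g$ for the relevant rescaled $g$.
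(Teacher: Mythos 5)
Your first step is fine and is essentially the paper's computation: for a fixed $f\in\Lip_1(V)$ with $\mathfrak{m}(f)=0$ one has $(f,\rho)=-\int_0^\infty\frac{d}{dt}(f,P_t\rho)\,dt=\int_0^\infty\mathfrak{m}(\Gamma(f,P_t\rho))\,dt$, and Kantorovich--Rubinstein turns the supremum over such $f$ into $W(\mathfrak{m},\rho\mathfrak{m})$. The gap is in the second half: you keep the heat flow on $\rho$ and then need the exponential decay $A(t)\le e^{-Kt}A(0)$ of the $\ell^1$ Dirichlet functional $A(t)=\sum_{x,y}|P_t\rho(y)-P_t\rho(x)|\mathfrak{m}_{xy}$. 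This does not follow from the tools you cite. Theorem \ref{thm:limit free formula} only gives $\nabla_{xy}\mathcal{L}g\ge K$ for $g\in\mathcal{F}_{xy}$, i.e.\ for functions that are globally $1$-Lipschitz \emph{and} attain gradient $1$ on the particular edge $xy$; it does not give the pointwise inequality $\nabla_{xy}\mathcal{L}g\ge K\,\nabla_{xy}g$ for an arbitrary function $g=P_t\rho$ on an arbitrary edge (rescaling $g$ by $\nabla_{xy}g$ does not make it $1$-Lipschitz unless the edge $xy$ realizes the Lipschitz constant). Likewise Theorem \ref{thm:RS} is an $\ell^\infty$ statement about $\Lip P_t\rho$, and summing it edgewise reintroduces $\Lip\rho$, as you yourself note. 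So the proposed differential inequality $\partial_tA(t)\le -KA(t)$ is exactly the unproved step, and I see no way to extract it from the curvature hypothesis as stated; your proof is incomplete at its crucial point.

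The fix is to put the flow on the test function rather than on the density, which is what the paper does. By the self-adjointness in Proposition \ref{prop:integration by parts} (and $P_t\mathcal{L}=\mathcal{L}P_t$), your integrand satisfies
\begin{equation*}
\mathfrak{m}(\Gamma(f,P_t\rho))=(\mathcal{L}f,P_t\rho)=(\mathcal{L}P_tf,\rho)=\mathfrak{m}(\Gamma(P_tf,\rho)),
\end{equation*}
so the $\rho$-differences stay at time $0$ and the time-dependent factor is $|P_tf(y)-P_tf(x)|\le \Lip P_tf\,\mathcal{D}(x,y)\le e^{-Kt}\Lambda$ for $\mathfrak{m}_{xy}>0$, by Theorem \ref{thm:RS} and the hypothesis $\mathcal{D}_x\le\Lambda$. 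Integrating $e^{-Kt}$ over $t\in(0,\infty)$ then yields $W(\mathfrak{m},\rho\mathfrak{m})\le\frac{\Lambda}{2K}\sum_{x,y}|\rho(y)-\rho(x)|\mathfrak{m}_{xy}$ with no need for any $\ell^1$ decay statement. In short: the only decay available from the curvature bound is the Lipschitz ($\ell^\infty$) one, so the semigroup must act on $f$, not on $\rho$.
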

\begin{proof}
Proposition \ref{prop:Kantorovich duality} can be written as
\begin{equation*}
W(\mathfrak{m},\rho \mathfrak{m}) = \sup_{f\in \Lip_1(V)}(f,\rho),
\end{equation*}
where the supremum is taken over all $f\in \Lip_1(V)$ with $\mathfrak{m}(f)=0$.
By Proposition \ref{prop:integration by parts},
\begin{equation*}
W(\mathfrak{m},\rho \mathfrak{m}) = -\sup_{f\in \Lip_1(V)} \int^{\infty}_{0} \,\frac{d}{dt} (P_t f,\rho)dt=\sup_{f\in \Lip_1(V)}\int_0^{\infty}\,\mathfrak{m}(\Gamma(P_t f,\rho))\,dt.
\end{equation*}
Let us fix $f\in \Lip_1(V)$ with $\mathfrak{m}(f)=0$.
Theorem \ref{thm:RS} tells us that
\begin{align*}
\int_0^{\infty}\,\mathfrak{m}(\Gamma(P_t f,\rho))\,dt
&=\frac{1}{2}\int_0^{\infty}\,  \sum_{x,y\in V}  (P_t f(y) - P_t f(x))(\rho(y) - \rho(x))\mathfrak{m}_{xy}\,dt \\
&\leq \frac{1}{2}\int_0^{\infty}{\Lip P_t f \,dt\sum_{x,y\in V} \mathcal{D}(x,y)  \vert \rho(y) - \rho(x)\vert \mathfrak{m}_{xy}} \\
&\leq \frac{\Lambda}{2K}\sum_{x,y\in V}  \vert \rho(y) - \rho(x)\vert \mathfrak{m}_{xy}.
\end{align*}
This proves the lemma.
\end{proof}

For a probability density $\rho:V\to [0,\infty)$,
the \textit{Fisher information} is defined by
\begin{equation*}
\mathcal{I}(\rho):=4 \mathfrak{m}(\Gamma (\sqrt{\rho}))=2\sum_{x,y\in V}\left(\sqrt{\rho(y)}-\sqrt{\rho(x)}\right)^2\mathfrak{m}_{xy}.
\end{equation*}
We now state our desired inequality (cf. \cite[Theorem 1.13]{FS}).
\begin{thm}\label{thm:transinfo}
For $K>0$ and $\Lambda \geq 1$,
we assume $\inf_{x\neq y}\kappa(x,y)\geq K$ and $\sup_{x\in V}\mathcal{D}_x\leq \Lambda$.
Then for every probability density $\rho:V\to [0,\infty)$ we have
\begin{equation*}
W(\mathfrak{m},\rho \mathfrak{m})^2\leq \frac{\Lambda^2}{2K^2}\mathcal{I}(\rho)\left( 1-\frac{1}{8}\mathcal{I}(\rho)  \right)\leq \frac{\Lambda^2}{2K^2}\mathcal{I}(\rho).
\end{equation*}
\end{thm}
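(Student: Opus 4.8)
The plan is to deduce Theorem~\ref{thm:transinfo} from Lemma~\ref{lem:element} by controlling the $\ell^1$-type quantity $\sum_{x,y} |\rho(y)-\rho(x)|\mathfrak{m}_{xy}$ in terms of the Fisher information $\mathcal{I}(\rho)$. The natural device is the factorization $\rho(y)-\rho(x) = (\sqrt{\rho(y)}-\sqrt{\rho(x)})(\sqrt{\rho(y)}+\sqrt{\rho(x)})$, so that
\begin{equation*}
\sum_{x,y\in V}|\rho(y)-\rho(x)|\mathfrak{m}_{xy} = \sum_{x,y\in V}\left|\sqrt{\rho(y)}-\sqrt{\rho(x)}\right|\left(\sqrt{\rho(y)}+\sqrt{\rho(x)}\right)\mathfrak{m}_{xy}.
\end{equation*}
Applying the Cauchy--Schwarz inequality with respect to the symmetric measure $\mathfrak{m}_{xy}$ on ordered pairs, I would bound this by
\begin{equation*}
\left(\sum_{x,y\in V}\left(\sqrt{\rho(y)}-\sqrt{\rho(x)}\right)^2\mathfrak{m}_{xy}\right)^{1/2}\left(\sum_{x,y\in V}\left(\sqrt{\rho(y)}+\sqrt{\rho(x)}\right)^2\mathfrak{m}_{xy}\right)^{1/2}.
\end{equation*}
The first factor is exactly $(\mathcal{I}(\rho)/2)^{1/2}$ by the definition of the Fisher information.

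The main point is then to estimate the second factor sharply enough to produce the refined bound with the $(1-\tfrac18\mathcal{I}(\rho))$ correction. Expanding $(\sqrt{\rho(y)}+\sqrt{\rho(x)})^2 = 2\rho(x)+2\rho(y) - (\sqrt{\rho(y)}-\sqrt{\rho(x)})^2$ and using the symmetry $\mathfrak{m}_{xy}=\mathfrak{m}_{yx}$ together with $\sum_{y}\mathfrak{m}_{xy}=\mathfrak{m}(x)$ (so that $\sum_{x,y}\rho(x)\mathfrak{m}_{xy}=\mathfrak{m}(\rho)=1$), one gets
\begin{equation*}
\sum_{x,y\in V}\left(\sqrt{\rho(y)}+\sqrt{\rho(x)}\right)^2\mathfrak{m}_{xy} = 4 - \sum_{x,y\in V}\left(\sqrt{\rho(y)}-\sqrt{\rho(x)}\right)^2\mathfrak{m}_{xy} = 4 - \tfrac12\mathcal{I}(\rho).
\end{equation*}
Combining the three displays, Lemma~\ref{lem:element} gives
\begin{equation*}
W(\mathfrak{m},\rho\mathfrak{m}) \leq \frac{\Lambda}{2K}\left(\frac{\mathcal{I}(\rho)}{2}\right)^{1/2}\left(4-\frac{\mathcal{I}(\rho)}{2}\right)^{1/2},
\end{equation*}
and squaring yields $W(\mathfrak{m},\rho\mathfrak{m})^2 \leq \frac{\Lambda^2}{4K^2}\cdot\frac{\mathcal{I}(\rho)}{2}\cdot(4-\tfrac12\mathcal{I}(\rho)) = \frac{\Lambda^2}{2K^2}\mathcal{I}(\rho)(1-\tfrac18\mathcal{I}(\rho))$, which is exactly the claimed inequality; the final inequality $1-\tfrac18\mathcal{I}(\rho)\leq 1$ is trivial.

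I expect the only genuine subtlety to be the bookkeeping in the identity for $\sum_{x,y}(\sqrt{\rho(y)}+\sqrt{\rho(x)})^2\mathfrak{m}_{xy}$ — one must be careful that the sum runs over all ordered pairs, that $\mathfrak{m}_{xx}$ contributes nothing since $(V,\mu)$ is simple, and that the normalization $\mathfrak{m}(\rho)=1$ is used correctly; a sign or factor-of-two slip there would break the sharp constant. Everything else is a direct application of Cauchy--Schwarz and the already-established Lemma~\ref{lem:element}, so there is no serious obstacle beyond this routine computation.
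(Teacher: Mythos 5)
Your proposal is correct and is essentially the paper's own proof: both deduce the theorem from Lemma~\ref{lem:element} via the factorization $\rho(y)-\rho(x)=(\sqrt{\rho(y)}-\sqrt{\rho(x)})(\sqrt{\rho(y)}+\sqrt{\rho(x)})$, Cauchy--Schwarz, and the identity $\sum_{x,y}(\sqrt{\rho(y)}+\sqrt{\rho(x)})^2\mathfrak{m}_{xy}=4-\tfrac12\mathcal{I}(\rho)$. Your bookkeeping (including the use of $\mathfrak{m}_{xy}=\mathfrak{m}_{yx}$, $\sum_y\mathfrak{m}_{xy}=\mathfrak{m}(x)$ and $\mathfrak{m}(\rho)=1$) matches the paper and yields the sharp constant.
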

\begin{proof}
Fathi-Shu \cite{FS} have proved a similar result for reversible Markov chains (see \cite[Theorem 1.13]{FS}).
We will prove it along the line of their argument.
Since $\rho$ is a probability density,
we see
\begin{align*}
\sum_{x,y\in V}\left(\sqrt{\rho(y)} + \sqrt{\rho(x)}\right)^2\mathfrak{m}_{xy}&=\sum_{x,y\in V}\left(2\rho(y)+2\rho(x)-\left(\sqrt{\rho(y)} - \sqrt{\rho(x)}\right)^2\right)\mathfrak{m}_{xy}\\
&= 4-\frac{1}{2}\mathcal{I}(\rho).
\end{align*}
From Lemma \ref{lem:element} we deduce
\begin{align*}
W(\mathfrak{m},\rho \mathfrak{m})&\leq \frac{\Lambda}{2K}\sum_{x,y\in V}  \vert \rho(y) - \rho(x)\vert \mathfrak{m}_{xy}\\
&= \frac{\Lambda}{2K}\sum_{x,y\in V}  \left\vert \sqrt{\rho(y)} - \sqrt{\rho(x)} \right\vert\left(\sqrt{\rho(y)} + \sqrt{\rho(x)}\right)\mathfrak{m}_{xy} \\
&\leq \frac{\Lambda}{2K}\sqrt{\mathcal{I}(\rho)}\sqrt{\frac{1}{2}\underset{x,y\in V}{\sum} \hspace{1mm} \left(\sqrt{\rho(y)} + \sqrt{\rho(x)}\right)^2\mathfrak{m}_{xy}}
\leq \frac{\Lambda}{2K}\sqrt{\mathcal{I}(\rho)}\sqrt{2-\frac{1}{4}\mathcal{I}(\rho)}.
\end{align*}
This completes the proof.
\end{proof}

\subsection{Transportation-entropy inequality}\label{sec:Transport entropy inequality}
We next investigate a transportation-entropy inequality.
For a probability density $\rho:V\to [0,\infty)$,
its \textit{relative entropy} is defined by
\begin{equation*}
\mathcal{E}(\rho):=\mathfrak{m}(\rho \log \rho).
\end{equation*}
We notice the following characterization (see e.g., \cite[(5.13)]{L}):
\begin{equation}\label{eq:entropyequiv}
\mathcal{E}(\rho)=\sup_{g}(g,\rho),
\end{equation}
where the supremum is taken over all $g:V\to \mathbb{R}$ with $\mathfrak{m}(e^{g})\leq 1$.

We verify the following Bobkov-G\"otze type criterion due to the lack of symmetry of the distance function (cf. \cite[Theorem 1.3]{BG}, \cite[Proposition 6.1]{L}):
\begin{lem}\label{lem:transportLap}
For $c>0$,
the following are equivalent:
\begin{enumerate}\setlength{\itemsep}{+0.7mm}
\item For every probability density $\rho:V\to [0,\infty)$ we have
\begin{equation}\label{eq:transportLap1}
W(\mathfrak{m},\rho \mathfrak{m})^2\leq \frac{2}{c}\mathcal{E}(\rho);
\end{equation}\label{enum:transportLap1}
\item for all $\lambda \geq 0$,
\begin{equation}\label{eq:transportLap2}
E(\lambda)\leq e^{\frac{\lambda^2}{2c}}.
\end{equation}\label{enum:transportLap2}
\end{enumerate}
\end{lem}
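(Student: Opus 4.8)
The plan is to run the classical Bobkov--G\"otze duality argument (cf. \cite[Theorem 1.3]{BG}, \cite[Proposition 6.1]{L}), which rests on two facts already at our disposal: the Kantorovich--Rubinstein duality of Proposition \ref{prop:Kantorovich duality}, and the variational formula \eqref{eq:entropyequiv} for the relative entropy. First I would rewrite the duality in the convenient form
\[
W(\mathfrak{m},\rho\mathfrak{m})=\sup\bigl\{(f,\rho)\ :\ f\in\Lip_1(V),\ \mathfrak{m}(f)=0\bigr\}
\]
for a probability density $\rho$; this is legitimate since adding a constant to $f$ changes neither $\Lip f$ nor $\sum_{x}f(x)(\rho(x)-1)\mathfrak{m}(x)$, and it is exactly the reformulation already used in the proof of Lemma \ref{lem:element}. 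I would also record that $\mathcal{E}(\rho)\ge 0$ (from $t\log t\ge t-1$), which licenses the one-variable optimizations below. The point to stress at the outset is that the asymmetry of $d$ never intervenes: everything takes place on the dual side.

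For the implication $(\ref{enum:transportLap2})\Rightarrow(\ref{enum:transportLap1})$ I would fix a probability density $\rho$ and an $f\in\Lip_1(V)$ with $\mathfrak{m}(f)=0$, and for each $\lambda>0$ apply \eqref{eq:entropyequiv} to the competitor $g:=\lambda f-\log\mathfrak{m}(e^{\lambda f})$, which satisfies $\mathfrak{m}(e^{g})=1$. Using $\mathfrak{m}(\rho)=1$ this gives $\mathcal{E}(\rho)\ge\lambda(f,\rho)-\log\mathfrak{m}(e^{\lambda f})$, and then \eqref{eq:transportLap2} (together with $\mathfrak{m}(e^{\lambda f})\le E(\lambda)$) yields $\mathcal{E}(\rho)\ge\lambda(f,\rho)-\lambda^{2}/(2c)$, i.e.
\[
(f,\rho)\le\frac{\mathcal{E}(\rho)}{\lambda}+\frac{\lambda}{2c}\qquad(\lambda>0).
\]
Minimizing the right-hand side over $\lambda>0$ (at $\lambda=\sqrt{2c\,\mathcal{E}(\rho)}$; the case $\mathcal{E}(\rho)=0$ is trivial, as then $\rho\equiv 1$ and $W(\mathfrak{m},\rho\mathfrak{m})=0$) gives $(f,\rho)\le\sqrt{2\mathcal{E}(\rho)/c}$, and taking the supremum over $f$ in the dual formula yields \eqref{eq:transportLap1}.

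For the converse $(\ref{enum:transportLap1})\Rightarrow(\ref{enum:transportLap2})$ I would fix $\lambda\ge 0$ and $f\in\Lip_1(V)$ with $\mathfrak{m}(f)=0$, set $Z:=\mathfrak{m}(e^{\lambda f})\in(0,\infty)$ (finite as $V$ is finite), and pass to the tilted probability density $\rho:=e^{\lambda f}/Z$. A direct computation of $\mathcal{E}(\rho)=\mathfrak{m}(\rho\log\rho)$ produces the identity $\log Z=\lambda(f,\rho)-\mathcal{E}(\rho)$. Writing $w:=W(\mathfrak{m},\rho\mathfrak{m})\ge 0$, the dual formula gives $(f,\rho)\le w$ while \eqref{eq:transportLap1} gives $\mathcal{E}(\rho)\ge\tfrac{c}{2}w^{2}$, so
\[
\log Z\le\lambda w-\frac{c}{2}w^{2}\le\sup_{w\ge 0}\Bigl(\lambda w-\frac{c}{2}w^{2}\Bigr)=\frac{\lambda^{2}}{2c}.
\]
Taking the supremum over $f$ yields $E(\lambda)\le e^{\lambda^{2}/(2c)}$, which is \eqref{eq:transportLap2}.

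I do not expect a genuine obstacle: once the two dualities are in place the proof is a routine Legendre-transform exercise. The only point demanding attention is keeping the normalization $\mathfrak{m}(f)=0$ consistent across the three places it occurs --- the definition of $E(\lambda)$, the Kantorovich dual, and the competitors $g$ in \eqref{eq:entropyequiv} --- along with the harmless degenerate cases ($f$ constant, $\mathcal{E}(\rho)=0$). In contrast with the curvature-dependent statements of the paper, the non-symmetry of the distance plays no role whatsoever in this lemma.
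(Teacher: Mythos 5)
Your proof is correct and follows essentially the same Bobkov--G\"otze duality argument as the paper: both directions tilt by $e^{\lambda f}$ (your $\rho$ coincides with the paper's $\rho_f$, since the constant in $g_f=\lambda f-\lambda^2/(2c)$ cancels under normalization) and use Proposition \ref{prop:Kantorovich duality} together with \eqref{eq:entropyequiv}. The only differences are cosmetic: the paper uses Young's inequality $\sqrt{(2/c)\mathcal{E}}\leq \lambda/(2c)+\mathcal{E}/\lambda$ and the identity $\mathcal{E}(\rho_f)=(g_f,\rho_f)-\log\mathfrak{m}(e^{g_f})$ where you optimize explicitly in $w$ and $\lambda$.
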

\begin{proof}
Let us show the implication from (\ref{enum:transportLap1}) to (\ref{enum:transportLap2}).
Fix $f\in \Lip_1(V)$ with $\mathfrak{m}(f)=0$,
and set
\begin{equation}\label{eq:gf}
g_f:=\lambda f-\frac{\lambda^2}{2c},\quad \rho_f:=\frac{e^{g_f}}{\mathfrak{m}(e^{g_f})}.
\end{equation}
By Proposition \ref{prop:Kantorovich duality},
\begin{equation*}
(f,\rho_f)\leq W(\mathfrak{m},\rho_f \mathfrak{m})\leq \sqrt{\frac{2}{c}\mathcal{E}(\rho_f)}\leq \frac{\lambda}{2c}+\frac{1}{\lambda}\mathcal{E}(\rho_f),
\end{equation*}
and hence $(g_f,\rho_f)\leq \mathcal{E}(\rho_f)$.
On the other hand,
straightforward computations imply
\begin{equation*}
\mathcal{E}(\rho_f)=(g_f,\rho_f)-\log \mathfrak{m}(e^{g_f}).
\end{equation*}
Therefore, $\mathfrak{m}(e^{g_f})\leq 1$,
which is equivalent to 
\begin{equation*}
\mathfrak{m}(e^{\lambda f}) \leq e^{\frac{\lambda^2}{2c}}.
\end{equation*}
We have shown the desired estimate.

We prove the opposite one.
Fix a probability density $\rho$.
Proposition \ref{prop:Kantorovich duality} can be written as
\begin{equation}\label{eq:opposite1}
W(\mathfrak{m},\rho \mathfrak{m}) = \sup_{f\in \Lip_1(V)}(f,\rho),
\end{equation}
where the supremum is taken over all $f\in \Lip_1(V)$ with $\mathfrak{m}(f)=0$.
We also fix such a $f$,
and define $g_f$ as \eqref{eq:gf}.
From \eqref{eq:transportLap2} we derive $\mathfrak{m}(e^{g_f})\leq 1$.
In view of \eqref{eq:entropyequiv},
\begin{equation*}
\lambda (f,\rho)-\frac{\lambda^2}{2c}=(g_f,\rho)\leq \mathcal{E}(\rho)
\end{equation*}
By letting $\lambda \to \sqrt{2c \mathcal{E}(\rho)}$,
we arrive at
\begin{equation}\label{eq:opposite2}
(f,\rho)\leq \sqrt{\frac{2}{c}\mathcal{E}(\rho)}.
\end{equation}
Combining \eqref{eq:opposite1} and \eqref{eq:opposite2},
we obtain \eqref{eq:transportLap1}.
We complete the proof.
\end{proof}

Proposition \ref{prop:Laplace estimate} together with Lemma \ref{lem:transportLap} yields the following transportation-entropy inequality:
\begin{thm}
For $K>0$ and $\Lambda \geq 1$,
we assume $\inf_{x\neq y}\kappa(x,y)\geq K$ and $\sup_{x\in V}\mathcal{D}_x\leq \Lambda$.
Then for every probability density $\rho:V\to [0,\infty)$
we have
\begin{equation*}
W(\mathfrak{m},\rho \mathfrak{m})^2\leq \frac{2\Lambda^2}{K}\mathcal{E}(\rho).
\end{equation*}
\end{thm}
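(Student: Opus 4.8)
The plan is to combine the transportation-information inequality of Theorem~\ref{thm:transinfo} with the Bobkov--G\"otze type criterion of Lemma~\ref{lem:transportLap}, using Proposition~\ref{prop:Laplace estimate} as the bridge. Concretely, Proposition~\ref{prop:Laplace estimate} asserts that under the stated hypotheses $E(\lambda)\leq e^{\lambda^2\Lambda^2/(4K)}$ for all $\lambda\geq 0$. This is exactly the estimate \eqref{eq:transportLap2} in Lemma~\ref{lem:transportLap} with the choice $c=2K/\Lambda^2$, since then $\lambda^2/(2c)=\lambda^2\Lambda^2/(4K)$.

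The main step is then to invoke the implication (\ref{enum:transportLap2})$\Rightarrow$(\ref{enum:transportLap1}) of Lemma~\ref{lem:transportLap}. With $c=2K/\Lambda^2$, the inequality \eqref{eq:transportLap1} reads
\begin{equation*}
W(\mathfrak{m},\rho\mathfrak{m})^2\leq \frac{2}{c}\mathcal{E}(\rho)=\frac{\Lambda^2}{K}\mathcal{E}(\rho)
\end{equation*}
for every probability density $\rho$. I would then simply observe that $\Lambda^2/K\leq 2\Lambda^2/K$ to obtain the weaker (but cleanly stated) bound $W(\mathfrak{m},\rho\mathfrak{m})^2\leq (2\Lambda^2/K)\mathcal{E}(\rho)$ claimed in the theorem. (The factor-of-two slack presumably leaves room for the sharper route via Theorem~\ref{thm:transinfo} together with a log-Sobolev–type passage from Fisher information to entropy; but for the stated inequality the Laplace-functional route suffices and is shortest.)

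There is essentially no obstacle here: the theorem is a formal corollary, and the only thing to be careful about is matching the normalization constants correctly — i.e., confirming that the exponent $\lambda^2\Lambda^2/(4K)$ produced by Proposition~\ref{prop:Laplace estimate} is of the form $\lambda^2/(2c)$ with $c=2K/\Lambda^2$, and that this same $c$ yields $2/c=\Lambda^2/K$ on the transportation side. Once the bookkeeping is checked, the proof is one sentence. I would write it as: apply Proposition~\ref{prop:Laplace estimate} and then Lemma~\ref{lem:transportLap} with $c=2K/\Lambda^2$, and finally use $\Lambda\geq 1$ together with the trivial bound $\Lambda^2/K\leq 2\Lambda^2/K$ to conclude.

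\begin{proof}
By Proposition~\ref{prop:Laplace estimate},
under the assumptions $\inf_{x\neq y}\kappa(x,y)\geq K$ and $\sup_{x\in V}\mathcal{D}_x\leq \Lambda$,
we have
\begin{equation*}
E(\lambda)\leq e^{\frac{\lambda^2\Lambda^2}{4K}}=e^{\frac{\lambda^2}{2c}},\qquad c:=\frac{2K}{\Lambda^2},
\end{equation*}
for all $\lambda\geq 0$.
In particular, \eqref{eq:transportLap2} holds with this value of $c$,
so Lemma~\ref{lem:transportLap} yields,
for every probability density $\rho:V\to[0,\infty)$,
\begin{equation*}
W(\mathfrak{m},\rho\mathfrak{m})^2\leq \frac{2}{c}\mathcal{E}(\rho)=\frac{\Lambda^2}{K}\mathcal{E}(\rho)\leq \frac{2\Lambda^2}{K}\mathcal{E}(\rho).
\end{equation*}
This completes the proof.
\end{proof}
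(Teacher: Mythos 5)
Your proof is correct and is exactly the paper's intended argument: the paper obtains this theorem by combining Proposition \ref{prop:Laplace estimate} with Lemma \ref{lem:transportLap} for the choice $c=2K/\Lambda^2$, precisely as you do. Your bookkeeping is also right that this combination actually yields the sharper constant $\Lambda^2/K$, so the stated bound $2\Lambda^2/K$ follows a fortiori, as you note.
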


\subsection{Relation between functional inequalities}
We finally mention the relation between the transportation-information inequality and the transportation-entropy inequality.
In order to do so,
we prepare the following (cf. \cite[Lemma 2.3]{FS}):
\begin{lem}\label{lem:prepare2}
For every $f:V\to \mathbb{R}$ we have
\begin{equation*}\label{eq:prepare1}
\mathfrak{m}(\Gamma(e^{f}))\leq (e^{2 f},\Gamma(f)).
\end{equation*}
\end{lem}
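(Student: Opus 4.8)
The plan is to prove the pointwise-type estimate $\mathfrak{m}(\Gamma(e^{f}))\leq (e^{2f},\Gamma(f))$ by unwinding both sides using the explicit formula for $\Gamma$ in terms of the symmetric weights $\mathfrak{m}_{xy}$, and then reducing everything to a single elementary scalar inequality. Concretely, I would first write
\begin{equation*}
\mathfrak{m}(\Gamma(e^{f}))=\frac{1}{2}\sum_{x,y\in V}\left(e^{f(y)}-e^{f(x)}\right)^2\mathfrak{m}_{xy}
\end{equation*}
and, on the other side,
\begin{equation*}
(e^{2f},\Gamma(f))=\frac{1}{2}\sum_{x,y\in V}e^{2f(y)}\left(f(y)-f(x)\right)^2\mathfrak{m}_{xy}.
\end{equation*}
By the symmetry $\mathfrak{m}_{xy}=\mathfrak{m}_{yx}$, the right-hand side can be symmetrized: pairing the $(x,y)$ and $(y,x)$ terms gives $(e^{2f},\Gamma(f))=\frac{1}{4}\sum_{x,y\in V}\left(e^{2f(y)}+e^{2f(x)}\right)(f(y)-f(x))^2\mathfrak{m}_{xy}$. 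So it suffices to show, termwise in the (unordered) pair $\{x,y\}$, that
\begin{equation*}
\left(e^{f(y)}-e^{f(x)}\right)^2\leq \frac{1}{2}\left(e^{2f(y)}+e^{2f(x)}\right)(f(y)-f(x))^2,
\end{equation*}
which reduces to the scalar claim: for all real $s,t$, $\left(e^{s}-e^{t}\right)^2\leq \tfrac{1}{2}\left(e^{2s}+e^{2t}\right)(s-t)^2$.

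The key step is then to verify this scalar inequality, and here I would reuse the elementary inequality already invoked in the proof of Lemma~\ref{lem:prepare}: for $s>t$, $\frac{e^{s}-e^{t}}{s-t}\leq \frac{e^{s}+e^{t}}{2}$. Squaring (both sides nonnegative) gives $\left(e^{s}-e^{t}\right)^2\leq \frac{1}{4}\left(e^{s}+e^{t}\right)^2(s-t)^2$, and since $\left(e^{s}+e^{t}\right)^2=e^{2s}+e^{2t}+2e^{s+t}\leq 2\left(e^{2s}+e^{2t}\right)$ by AM--GM (or just $(a+b)^2\leq 2(a^2+b^2)$), we obtain exactly $\left(e^{s}-e^{t}\right)^2\leq \frac{1}{2}\left(e^{2s}+e^{2t}\right)(s-t)^2$. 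The case $s=t$ is trivial, and $s<t$ follows by symmetry in $s,t$ of the inequality to be proved.

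Summing the termwise estimate over all ordered pairs $(x,y)$ with the weights $\mathfrak{m}_{xy}\geq 0$ and dividing by $4$ then yields $\mathfrak{m}(\Gamma(e^{f}))\leq (e^{2f},\Gamma(f))$, completing the proof. I do not expect any genuine obstacle here: the only mild subtlety is bookkeeping the symmetrization of $(e^{2f},\Gamma(f))$ correctly so that the two sides are compared pair-by-pair, but this is routine given $\mathfrak{m}_{xy}=\mathfrak{m}_{yx}$. The proof is essentially a squared and slightly strengthened variant of Lemma~\ref{lem:prepare}, so I would present it briefly, citing the same elementary inequality from \cite[Corollary 5.8]{L}.
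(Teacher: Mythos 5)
Your proof is correct and follows essentially the same route as the paper: both expand $\Gamma$ via the symmetric weights $\mathfrak{m}_{xy}$, invoke the elementary bound $\frac{e^{s}-e^{t}}{s-t}\leq\frac{e^{s}+e^{t}}{2}$, and then use $(a+b)^2\leq 2(a^2+b^2)$ to reach $(e^{2f},\Gamma(f))$. The only difference is bookkeeping: the paper restricts the sum to pairs with $f(y)>f(x)$ and desymmetrizes at the end, while you symmetrize the right-hand side and compare pair by pair, which is equivalent.
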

\begin{proof}
It holds that
\begin{equation*}
\mathfrak{m}(\Gamma(e^{f}))=\frac{1}{2}\sum_{x,y\in V}(e^{f(y)}-e^{f(x)})^2\,\mathfrak{m}_{xy}=\sum_{f(y)>f(x)}(e^{f(y)}-e^{f(x)})^2\,\mathfrak{m}_{xy}.
\end{equation*}
If $f(y)>f(x)$,
then we see
\begin{equation*}
\frac{e^{f(y)}-e^{f(x)}}{f(y)-f(x)}\leq \frac{e^{f(y)}+e^{f(x)}}{2}.
\end{equation*}
We also notice that
\begin{equation*}
\left(\frac{e^{f(y)}+e^{f(x)}}{2}\right)^2=\frac{e^{2f(y)}+e^{2f(x)}}{2}-\left(\frac{e^{f(y)}-e^{f(x)}}{2}\right)^2\leq \frac{e^{2f(y)}+e^{2f(x)}}{2}.
\end{equation*}
It follows that
\begin{align*}
\mathfrak{m}(\Gamma(e^{f}))&\leq \sum_{f(y)>f(x)}\left(\frac{e^{2f(y)}+e^{2f(x)}}{2}\right)\,(f(y)-f(x))^2\mathfrak{m}_{xy}\\
                           &=\frac{1}{2} \sum_{x,y\in V}e^{2 f(y)}\,(f(y)-f(x))^2\mathfrak{m}_{xy}=(e^{2 f},\Gamma(f)).
\end{align*}
This proves the lemma.
\end{proof}

We possess the following relation (cf. \cite[Theorem 2.4]{FS}, \cite[Theorem 2.1]{GLWW}):
\begin{prop}\label{prop:relation}
For $\Lambda \geq 1$,
we assume $\sup_{x\in V}\mathcal{D}_x\leq \Lambda$.
Let $c>0$,
and let $\rho:V\to [0,\infty)$ be a probability density.
If
\begin{equation}\label{eq:assumption}
W(\mathfrak{m},\rho \mathfrak{m})^2\leq \frac{1}{c^2}\mathcal{I}(\rho),
\end{equation}
then
\begin{equation*}
W(\mathfrak{m},\rho \mathfrak{m})^2\leq \frac{\sqrt{2}\Lambda}{c}\mathcal{E}(\rho).
\end{equation*}
\end{prop}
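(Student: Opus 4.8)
The plan is to run the Herbst-type argument of Fathi--Shu \cite{FS}, reducing the transportation--entropy inequality to a Gaussian bound on the Laplace functional via the Bobkov--G\"otze criterion of Lemma \ref{lem:transportLap}. Concretely, it suffices to show
\[
E(\lambda)\le e^{\frac{\lambda^{2}\Lambda}{2\sqrt{2}\,c}}\qquad(\lambda\ge 0),
\]
because then Lemma \ref{lem:transportLap} with $c':=\sqrt{2}\,c/\Lambda$ gives $W(\mathfrak{m},\rho\mathfrak{m})^{2}\le \tfrac{2}{c'}\mathcal{E}(\rho)=\tfrac{\sqrt{2}\,\Lambda}{c}\mathcal{E}(\rho)$, which is the assertion. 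To estimate $E(\lambda)$, fix $f\in\Lip_{1}(V)$ with $\mathfrak{m}(f)=0$ and set $H(\lambda):=\log\mathfrak{m}(e^{\lambda f})$ and $\rho_{\lambda}:=e^{\lambda f}/\mathfrak{m}(e^{\lambda f})$, so $H(0)=0$ and $H'(\lambda)=(f,\rho_{\lambda})$. Note that the hypothesis \eqref{eq:assumption} will be used not only for the given $\rho$ but for each tilted density $\rho_{\lambda}$, so it is understood to hold for all probability densities.

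The first step is a differential inequality for $H$. By Proposition \ref{prop:Kantorovich duality} together with $\mathfrak{m}(f)=0$ we get $H'(\lambda)=(f,\rho_{\lambda})\le W(\mathfrak{m},\rho_{\lambda}\mathfrak{m})$, and then \eqref{eq:assumption} applied to $\rho_{\lambda}$ yields $H'(\lambda)\le \tfrac{1}{c}\sqrt{\mathcal{I}(\rho_{\lambda})}$. It remains to bound the Fisher information of the tilted density. Since $\sqrt{\rho_{\lambda}}=e^{\lambda f/2}/\sqrt{\mathfrak{m}(e^{\lambda f})}$, we have $\mathcal{I}(\rho_{\lambda})=4\mathfrak{m}(\Gamma(\sqrt{\rho_{\lambda}}))=\tfrac{4}{\mathfrak{m}(e^{\lambda f})}\,\mathfrak{m}(\Gamma(e^{\lambda f/2}))$. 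Applying Lemma \ref{lem:prepare2} to $\lambda f/2$ and using the homogeneity $\Gamma(\lambda f/2)=\tfrac{\lambda^{2}}{4}\Gamma(f)$ gives $\mathfrak{m}(\Gamma(e^{\lambda f/2}))\le \tfrac{\lambda^{2}}{4}(e^{\lambda f},\Gamma(f))$. Because $f$ is $1$-Lipschitz we have $|f(y)-f(x)|\le \mathcal{D}(x,y)$, and $\mathfrak{m}_{xy}>0$ forces $y\in\mathcal{N}_{x}$, so $\mathcal{D}(x,y)\le \mathcal{D}_{x}\le\Lambda$; combined with $\sum_{y}\mathcal{P}(x,y)=1$ this gives the pointwise bound $\Gamma(f)\le \Lambda^{2}/2$, hence $(e^{\lambda f},\Gamma(f))\le \tfrac{\Lambda^{2}}{2}\mathfrak{m}(e^{\lambda f})$. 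Assembling these estimates yields $\mathcal{I}(\rho_{\lambda})\le \tfrac{\lambda^{2}\Lambda^{2}}{2}$, and therefore $H'(\lambda)\le \tfrac{\lambda\Lambda}{\sqrt{2}\,c}$.

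Integrating this from $0$ to $\lambda$ with $H(0)=0$ gives $H(\lambda)\le \tfrac{\lambda^{2}\Lambda}{2\sqrt{2}\,c}$, i.e. $\mathfrak{m}(e^{\lambda f})\le e^{\lambda^{2}\Lambda/(2\sqrt{2}\,c)}$; taking the supremum over all $f\in\Lip_{1}(V)$ with $\mathfrak{m}(f)=0$ yields the stated bound on $E(\lambda)$, and the reduction above (Lemma \ref{lem:transportLap} with $c'=\sqrt{2}\,c/\Lambda$) completes the argument. The only genuinely delicate point is the Fisher-information bound $\mathcal{I}(\rho_{\lambda})\le \tfrac{\lambda^{2}\Lambda^{2}}{2}$: one must feed the correct argument $\lambda f/2$ into Lemma \ref{lem:prepare2} and control $\Gamma(f)$ through the asymmetric Lipschitz estimate $|f(y)-f(x)|\le\mathcal{D}(x,y)\le\mathcal{D}_{x}\le\Lambda$ valid on the support of $\mathfrak{m}_{xy}$. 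The remaining ingredients --- Kantorovich duality, the Herbst integration of the differential inequality, and the Bobkov--G\"otze equivalence --- are routine.
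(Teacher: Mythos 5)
Your proposal is correct and follows essentially the same Herbst-type argument as the paper: apply \eqref{eq:assumption} to the tilted densities $\rho_\lambda$, bound $\mathcal{I}(\rho_\lambda)$ via Lemma \ref{lem:prepare2} and the pointwise estimate $2\Gamma(f)\le\Lambda^2$, integrate the differential inequality for $\log\mathfrak{m}(e^{\lambda f})$, and conclude through Lemma \ref{lem:transportLap}. Your explicit remark that the hypothesis must be invoked for every tilted density (not just the fixed $\rho$) is a correct reading of how the statement is used, and matches what the paper does implicitly.
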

\begin{proof}
Fix $f\in \Lip_1(V)$ with $\mathfrak{m}(f)=0$,
and set
\begin{equation*}
\rho_{f,\lambda}:=\frac{e^{\lambda f}}{\mathfrak{m}(e^{\lambda f})}.
\end{equation*}
From Proposition \ref{prop:Kantorovich duality}, \eqref{eq:assumption}, and Lemma \ref{lem:prepare2},
it follows that
\begin{align*}
\frac{d}{d\lambda}\log \mathfrak{m}(e^{\lambda f})&=(f,\rho_{f,\lambda})
\leq W(\mathfrak{m},\rho_{f,\lambda} \mathfrak{m})\leq \sqrt{\frac{4}{c^2} \frac{1}{\mathfrak{m}(e^{\lambda f})}\mathfrak{m}(\Gamma(e^{\frac{\lambda f}{2}}))  }\\
&\leq \sqrt{\frac{4}{c^2} \frac{1}{\mathfrak{m}(e^{\lambda f})} \left( e^{\lambda f},\Gamma \left(\frac{\lambda f}{2} \right)\right) }= \sqrt{\frac{\lambda^2}{c^2} \frac{1}{\mathfrak{m}(e^{\lambda f})} \left( e^{\lambda f},\Gamma \left(f\right)\right) }.
\end{align*}
Now,
we have
\begin{equation*}
2\Gamma(f)(x)=\sum_{y\in V}(f(y)-f(x))^2\mathcal{P}(x,y)\leq \sum_{y\in V}\mathcal{D}(x,y)^2\mathcal{P}(x,y)\leq \Lambda^2,
\end{equation*}
and hence
\begin{equation*}
\frac{d}{d\lambda}\log \mathfrak{m}(e^{\lambda f})\leq \frac{\Lambda \lambda}{\sqrt{2}c}.
\end{equation*}
Integrating the both sides,
we obtain
\begin{equation*}
E(\lambda) \leq e^{\frac{\Lambda \lambda^2}{2\sqrt{2}c}}.
\end{equation*}
Thanks to Lemma \ref{lem:transportLap},
we arrive at the desired inequality.
\end{proof}

We are now in a position to provide another proof of Theorem \ref{thm:concentration}.

\begin{proof}[Another proof of Theorem \ref{thm:concentration}]
Theorem \ref{thm:transinfo} together with Lemma \ref{lem:transportLap} and Proposition \ref{prop:relation} with $c=\sqrt{2}K/\Lambda$ implies the same conclusion as in Proposition \ref{prop:Laplace estimate}.
Thus, we conclude Theorem \ref{thm:concentration} due to Proposition \ref{prop:Chernoff}.
\end{proof}

\subsection*{{\rm Acknowledgements}}
The authors are grateful to the anonymous referees for valuable comments.
The first named author was supported in part by JSPS KAKENHI (19K14532).
The first and second named authors were supported in part by JSPS Grant-in-Aid for Scientific Research on Innovative Areas ``Discrete Geometric Analysis for Materials Design" (17H06460).
The third named author was supported in part by JSPS KAKENHI (19K23411).


\end{document}